\newcommand{\Lg}{{\mathfrak{g}}}
\newcommand{\ud}{{\underline{{\mathbf d}}}}
\newcommand{\Waff}{{W^{{\mathfrak a}}}}
\def\mapright#1{\smash{\mathop{\longrightarrow}\limits^{#1}}}
\newcommand{\bU}{{\mathbb U}}
\newcommand{\T}{{\mathcal T}}
\newcommand{\Bc}{{\mathcal B}}
\newcommand{\ui}{{\underline{\bf i}}}
\newcommand{\bA}{{{\mathbb A}}}
\newcommand{\A}{{{\mathbb V}}}
\newcommand{\bW}{{{\mathbb W}}}
\newcommand{\bC}{{{\mathbb C}}}
\newcommand{\bc}{{{\mathbb C}}}
\newcommand{\Z}{{{\mathbb Z}}}
\newcommand{\bz}{{{\mathbb Z}}}
\newcommand{\R}{{{\mathbb R}}}
\newcommand{\Gl}{{\text{GL}}}
\newcommand{\semi}{{\,\rule[.1pt]{.4pt}{5.3pt}\hskip-1.9pt\times}}
\newcommand{{\sminus}}{\hbox{\tiny{r}}}
\newcommand{{\splus}}{\hbox{\tiny{r\hskip-2pt+\hskip-2pt 4}}}
\newcommand{\la}{\langle}
\newcommand{\ra}{\rangle}
\newcommand{\Mor}{{{\hbox{\rm Mor}}\,}}
\newcommand{\Aut}{{{\hbox{\rm Aut}}\,}}
\newcommand{\Lie}{{\mathop{\hbox{\rm Lie}}\,}}
\newcommand{\Pc}{\ensuremath{\mathcal{P}}}
\newcommand{\Qc}{\ensuremath{\mathcal{Q}}}
\newcommand{\co}{\mathcal{O}}
\newcommand{\ck}{\mathcal{K}}
\newcommand{\cg}{\mathcal{G}}
\newcommand{\cl}{\mathcal{L}}
\newcommand{\cb}{\mathcal{B}}
\newcommand{\cp}{\mathcal{P}}
\newcommand{\cq}{\mathcal{Q}}
\newcommand{\ens}[1]{\ensuremath{\left\{#1\right\}}}
\newcommand{\af}{\ensuremath{\mathfrak{a}}}
\newcommand{\lam}{{\lambda}}
\newtheorem{lem}{Lemma}
\newtheorem{defn}{Definition}
\newtheorem{thm}{Theorem}
\newtheorem*{thmohne}{Theorem}
\newtheorem{coro}{Corollary}
\newtheorem*{corohne}{Corollary}
\newtheorem{rem}{Remark}
\newtheorem{exam}{Example}
\newtheorem{prop}{Proposition}
 \gdef\YYoung(#1){\hbox{$\vcenter
 {\mathcode`,="8000\mathcode`|="8000
  \def,{\global\advance\cols by 1 &}%
  \def|{\cr
        \multispan{\the\cols}\hrulefill\cr
        &\global\cols=2 }%
  \offinterlineskip\everycr{}\tabskip=0pt
  \dimen0=\ht\strutbox \advance\dimen0 by \dp\strutbox
  \halign
   {\vrule height \ht\strutbox depth \dp\strutbox##
    &&\hbox to \dimen0{\hss$##$\hss}\vrule\cr
    \noalign{\hrule}&\global\cols=2 #1\crcr
    \multispan{\the\cols}\hrulefill\cr%
   }
 }$}}
 \gdef\Skew(#1:#2){\hbox{$\vcenter
 {\mathcode`,="8000\mathcode`|="8000
  \dimen0=\ht\strutbox \advance\dimen0 by \dp\strutbox
  \def\boxbeg{\vbox
    \bgroup\hrule\kern-0.4pt\hbox to\dimen0\bgroup\strut\vrule\hss$}%
  \def\boxend{$\hss\egroup\hrule\egroup}%
  \def,{\boxend\boxbeg}%
  \def|##1:{\boxend\vrule\egroup\nointerlineskip\kern-0.4pt
    \moveright##1\dimen0\hbox\bgroup\boxbeg}%
  \def\\##1\\##2:{\boxend\vrule\egroup\nointerlineskip\kern-0.4pt
    \kern ##1\dimen0\moveright##2\dimen0\hbox\bgroup\boxbeg}%
  \moveright#1\dimen0\hbox\bgroup\boxbeg#2\boxend\vrule\egroup
 }$}}
\def\mapri#1{\smash{\mathop{\longrightarrow}\limits^{#1}}}
\title{Knuth relations, tableaux and MV-cycles}
\author{St\'ephane Gaussent, Peter Littelmann and An Hoa Nguyen}
\address{St\'ephane Gaussent: \newline
Universit\'e de Lyon, Institut Camille Jordan (UMR 5208) 
Universit\'e Jean Monnet,\newline
23, rue du Docteur Michelon
42023 Saint-Etienne Cedex 2, France
}
\email{stephane.gaussent@univ-st-etienne.fr}
\address{Peter Littelmann:\newline
Mathematisches Institut, Universit\"at zu K\"oln,\newline
Weyertal 86-90, D-50931 K\"oln,Germany
}
\email{littelma@math.uni-koeln.de}
\address{An Hoa Nguyen:\newline
Mathematisches Institut, Universit\"at zu K\"oln,\newline
Weyertal 86-90, D-50931 K\"oln,Germany
}
\email{ahnguyen@math.uni-koeln.de}
\date{03.10.2012}                                           % Activate to display a given date or no date
\dedicatory{Dedicated to C. S. Seshadri on the occasion of his 80th birthday}
\begin{document}
\maketitle

{\bf Abstract: } We give a geometric interpretation of the Knuth equivalence relations in terms of the affine Gra\ss mann variety. The Young tableaux are seen as sequences of coweights, called galleries. We show that to any gallery corresponds a Mirkovi\'c-Vilonen cycle and that two galleries are equivalent if, and only if, their associated MV cycles are equal. Words are naturally identified to some galleries. So, as a corollary, we obtain that two words are Knuth equivalent if, and only if, their associated MV cycles are equal.

\section{Introduction}
In the theory of finite dimensional representations of complex 
reductive algebraic groups, the group $GL_n(\bC)$ is singled out by
the fact that besides the usual language of weight lattices, roots 
and characters, there exists an additional important combinatorial 
tool: the plactic monoid of Knuth \cite{knuth}, Lascoux and Sch\"utzenberger \cite{LaS}, and
the Young tableaux. 

The plactic monoid is the monoid of all words in the alphabet $\bA=\{1,\ldots,n\}$ modulo Knuth equivalence 
(see \eqref{knuthrelation}). The map associating to a word its class in the plactic monoid has a remarkable
section: the semi-standard Young tableaux. In the framework of crystal bases 
\cite{joseph}, \cite{hongkang}, \cite{kashiwaraone}, \cite{kashiwaratwo}, \cite{lusztig} and
the path model of representations \cite{Li2}, this monoid got a new interpretation, 
which made it possible to generalize the classical tableaux character formula to integrable highest 
weight representations of Kac-Moody algebras.

Mirkovi\'c and Vilonen \cite{MV} gave a geometric interpretation of weight multiplicities for finite
dimensional representations of a semi simple algebraic group $G$. Given a dominant coweight $\lambda^\vee$,
let $X_{\lambda^\vee}\subset \cg$ be the Schubert variety in the associated affine Gra\ss mann variety $\cg$
\cite{lusztig, Ku}. Let $U^-$ be the maximal unipotent subgroup opposite to a fixed Borel subgroup
and denote by $\co$ the ring of formal power series $\bc[[t]]$ and by $\ck$ its quotient field.
The irreducible components of $\overline{G(\co).\lam^\vee \cap U^-(\ck).\mu^\vee}\subset X_{\lambda^\vee}$ (Section~\ref{mvgallery})
are called MV-cycles and the number of irreducible components is the weight multiplicity of the weight $\mu^\vee$
in the complex irreducible representation $V(\lam^\vee)$ for the Langlands dual group $G^\vee$. 

This leads naturally to the question: is it possible to give a geometric formulation of the Knuth relations?
The aim of this article is to do this for $G=SL_n(\bc)$.  
For $1\le d\le n$ denote by $I_{d,n}$ the set $\{\ui=(i_1,\ldots,i_d)\mid 1\le i_1<i_2<\ldots i_d\le n\}$,
and set $\bW=I_{1,n}\cup I_{2,n}\cup\ldots\cup I_{n-1,n}\cup I_{n,n}$.
We can identify $\bA$ with $I_{1,n}$ and hence view $\bA$ as a subset of $\bW$. Words
in the alphabet $\bW$ are called {\it galleries}, and we introduce on the set of galleries an 
equivalence relation $\sim_K$ which, restricted to words in the alphabet $\bA$, is exactly the Knuth equivalence. 
To a given gallery $\gamma$, we associate 
in a canonical way a Bott-Samelson type variety $\Sigma$, a cell $C_\gamma\subset \Sigma$,
a dominant coweight $\nu^\vee$,
a map $\pi:\Sigma\rightarrow \cg$ and show (see Theorem~\ref{mainthm1}):
\begin{thmohne}
{\it a)} The closure of the image $\overline{\pi(C_\gamma)}$ is a MV-cycle in the Schubert variety $X_{\nu^\vee}$.

{\it b)} Given a second gallery $\gamma'$ with associated
Bott-Samelson type variety $\Sigma'$, cell $C_{\gamma'}\subset \Sigma'$ and map $\pi':\Sigma'\rightarrow \cg$,
then $\gamma\sim_K\gamma'$ if and only if $\overline{\pi(C_\gamma)}=\overline{\pi'(C_{\gamma'})}$.
\end{thmohne}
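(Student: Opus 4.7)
The plan is to treat the two parts of the theorem in sequence. For part (a), the strategy is to unpack the definitions of the Bott-Samelson-type variety $\Sigma$, the cell $C_\gamma$, and the map $\pi$ attached to the gallery $\gamma$, and then match $\pi(C_\gamma)$ with an open piece of an MV-intersection. Concretely, I would write $\pi$ explicitly in terms of loop-group coordinates along $\gamma$: each step of the gallery contributes an affine factor to $\Sigma$, and $\pi$ evaluates a product of one-parameter subgroups attached to these factors at the distinguished point $\nu^\vee \in \cg$. One then verifies that the image lies in $G(\co).\nu^\vee \cap U^-(\ck).\mu^\vee$, where $\mu^\vee$ is the target (weight) of $\gamma$, that the restriction of $\pi$ to $C_\gamma$ is birational onto its image, and that this image has the correct dimension to be an irreducible component---hence $\overline{\pi(C_\gamma)}$ is an MV-cycle. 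The dimension count should follow from a direct analysis of the positive coordinates of $C_\gamma$ together with the general theory of the MV-filtration.

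For part (b) I split into the two implications. For the ``if'' direction, since $\sim_K$ is generated by a small list of local elementary moves, it suffices to treat each generator separately. For each move I would exhibit an explicit identification of the relevant pieces of the two Bott-Samelson varieties that commutes with $\pi$ on the positive cells. These reduce to finitely many local, three-letter checks, each the geometric analogue of a classical Knuth move; the identifications can be written down in terms of the root subgroups appearing in the coordinates of $C_\gamma$ and $C_{\gamma'}$.

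The ``only if'' direction is where I expect the main obstacle to lie. Here one must show that Knuth-inequivalent galleries cannot produce the same MV-cycle. The natural strategy is to invoke the fundamental theorem of the plactic monoid: each Knuth class has a unique semi-standard Young tableau representative, so one reduces to proving that distinct tableaux of the same shape $\nu^\vee$ yield distinct MV-cycles. Combined with the Mirkovi\'c-Vilonen theorem that the number of MV-cycles in $\overline{G(\co).\nu^\vee \cap U^-(\ck).\mu^\vee}$ equals $\dim V(\nu^\vee)_{\mu^\vee}$---which in type $A$ is the number of semi-standard tableaux of shape $\nu^\vee$ and content $\mu^\vee$---the map ``gallery $\mapsto$ MV-cycle'' descends, by the ``if'' direction, to a map from tableaux to MV-cycles between two sets of the same cardinality; checking that it is surjective then forces bijectivity. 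The technical heart is to identify this geometric map with the standard combinatorial bijection, which I would attack by normalizing any gallery to its tableau via Knuth moves and then computing the image explicitly using part (a).
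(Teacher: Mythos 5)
Your treatment of part (b) follows essentially the paper's route: the ``if'' direction is reduced to the elementary three-letter Knuth moves and settled by explicit commutation of the affine root subgroups parametrizing the cell (this is what Lemmas~\ref{knuthinbuildung0}--\ref{knuthinbuildung2} do, after first establishing that $\pi(C_\gamma)$ depends only on the word $w_\gamma$ and that truncated images are stable under the relevant unipotent groups, two reductions you leave implicit but which are needed to localize the move inside a long word); the ``only if'' direction is reduced via the plactic monoid theorem to the statement that distinct semistandard tableaux give distinct MV-cycles, which the paper simply imports as the bijection of \cite{GL2}, while your counting argument is a reasonable sketch of how that bijection is proved.

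There is, however, a genuine gap in your part (a). You propose to show that $\pi\vert_{C_\gamma}$ is birational onto its image and that a dimension count identifies $\overline{\pi(C_\gamma)}$ as an irreducible component of $\overline{G(\co)\cdot\nu^\vee\cap U^-(\ck)\cdot\mu^\vee}$. This fails for a general gallery. By Lemma~\ref{dimension} the cell has dimension $(\lambda^\vee+\mu_r^\vee,\rho^\vee)$, where $\lambda^\vee=\sum_j\omega_{d_j}^\vee$ is determined by the \emph{type} of $\gamma$, whereas the MV-cycle it is claimed to sweep out lives in the possibly strictly smaller Schubert variety $X_{\nu^\vee}$, $\nu^\vee$ being determined by the shape of the rectified tableau, and has dimension $(\nu^\vee+\mu_r^\vee,\rho^\vee)$. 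Whenever $\nu^\vee\neq\lambda^\vee$ (already for the word $(2,1)$, whose tableau is a single column, so $\nu^\vee=\omega_2^\vee$ while $\lambda^\vee=2\omega_1^\vee$) the map $\pi\vert_{C_\gamma}$ drops dimension and cannot be birational; moreover you cannot even name the correct $\nu^\vee$ without first rectifying $\gamma$, i.e.\ without the Knuth-move analysis. The paper's logic runs the other way: it first proves the ``if'' direction of (b), which reduces an arbitrary gallery to the unique Knuth-equivalent semistandard Young tableau, and only for those galleries (the one-skeleton analogues of LS-galleries) does it invoke \cite{GL2} to conclude that the closure of the image is an MV-cycle. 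Your argument would need to be restructured in the same order, or restricted from the outset to tableau galleries.
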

In particular we get:
\begin{corohne}
Two words in the alphabet $\bA$ are Knuth equivalent
if and only if the closure of the images of the corresponding cells define the same MV-cycle. 
\end{corohne}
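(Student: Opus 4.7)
The plan is to deduce the Corollary directly from Theorem~\ref{mainthm1} by exploiting the embedding $\bA\simeq I_{1,n}\hookrightarrow \bW$ indicated in the introduction. Under this identification, every word $w=i_1\cdots i_k$ in the alphabet $\bA$ becomes a gallery $\gamma_w$ whose letters are the singletons $\{i_1\},\ldots,\{i_k\}$. The equivalence $\sim_K$ on galleries has been set up in such a way that its restriction to words in $\bA$ coincides with the classical Knuth equivalence, so the two notions of equivalence on $\bA$-words agree by design.

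With this identification, the argument is essentially a specialization of the main theorem. Let $w$ and $w'$ be two words in the alphabet $\bA$, and write $\gamma=\gamma_w$, $\gamma'=\gamma_{w'}$ for the corresponding galleries, with their associated Bott--Samelson type varieties $\Sigma,\Sigma'$, cells $C_\gamma,C_{\gamma'}$ and morphisms $\pi,\pi'$ to the affine Gra\ss mannian $\cg$. By Theorem~\ref{mainthm1}(a), the closures $\overline{\pi(C_\gamma)}$ and $\overline{\pi'(C_{\gamma'})}$ are genuine MV-cycles, so the statement of the Corollary is well-posed. Theorem~\ref{mainthm1}(b) then yields the chain
\[
w\text{ and }w'\text{ are Knuth equivalent}\;\Longleftrightarrow\;\gamma\sim_K\gamma'\;\Longleftrightarrow\;\overline{\pi(C_\gamma)}=\overline{\pi'(C_{\gamma'})},
\]
which is precisely the asserted equivalence of MV-cycles.

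The only step that is not entirely formal is the compatibility between $\sim_K$ and the classical Knuth relations on $\bA$-words; this is the main (and essentially the only) obstacle to overcome. Concretely, one has to trace through the local moves defining $\sim_K$ when all letters are singletons of $I_{1,n}$ and match them with the two families of Knuth relations $bac\sim bca$ for $a<b\le c$ and $acb\sim cab$ for $a\le b<c$. Once this combinatorial check has been carried out—and it is built into the very definition of $\sim_K$ given in the body of the paper—the Corollary is an immediate consequence of Theorem~\ref{mainthm1}, with no further geometric input required.
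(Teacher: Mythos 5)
Your proposal is correct and coincides with the paper's (essentially unwritten) argument: the corollary is stated there as an immediate specialization of Theorem~\ref{mainthm1} to galleries of type $(1,\ldots,1)$, and the compatibility of $\sim_K$ with the classical Knuth relations is indeed automatic, since the key tableau of such a gallery is a single row whose reading word is the word itself.
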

The theorem  generalizes for $G=SL_n(\bc)$ the result in \cite{GL2}, where it was proved that
(for semisimple algebraic groups) the image of a cell associated to a Lakshmibai-Seshadri galleries is an MV-cycle.

The construction of the Bott-Samelson type variety and the cell can be sketched out as follows: the elements
of the alphabet can be identified with weights occurring in the fundamental representations of the group $SL_n(\bc)$
(for example $i\leftrightarrow \epsilon_i$), so words and galleries can be viewed as polylines in the apartment 
associated to the maximal torus $T$ of diagonal matrices in $G=SL_n(\bc)$.
The vertices and edges of a polyline $\gamma$ give naturally rise to a sequence $P_0\supset Q_0\subset P_1\supset\ldots\subset P_r$
of parabolic subgroups of the associated affine Kac-Moody group, and hence we can associate to a gallery
the Bott-Samelson type variety $\Sigma=P_0\times_{Q_0}  P_1\times_{Q_1} \ldots \times_{Q_{r-1}}P_r/P_r$. In the language of buildings 
(which we do not use in this article) the variety $\Sigma$ can be seen as the variety of all galleries in the building 
of the same type as $\gamma$ and starting in $0$, making $\Sigma$ into a canonical object associated to $\gamma$.
By choosing a generic anti-dominant one parameter subgroup $\eta$ of $T$, we get a natural 
Bia{\l}ynicki-Birula cell \cite{BB} decomposition of $\Sigma$. The $\eta$-fixed points in $\Sigma$ correspond exactly
to galleries of the same type as $\gamma$ (section~\ref{galandkey}), and 
$C_\gamma=\{x\in\Sigma\mid \lim_{t\rightarrow 0} \eta(t)x=\gamma\}$. The variety $\Sigma$ is
a desingularization of a Schubert variety in $\cg$ and is hence naturally endowed with a morphism
$\pi:\Sigma\rightarrow \cg$ \cite{GL2}.

Given a gallery $\gamma$, we get a natural sequence of vertices $(\mu^\vee_0,\ldots,\mu^\vee_r)$ crossed by the
gallery. We attach to each vertex a subgroup $U^-_j$ of $U^-(\ck)$ and show that the image of the cell
is $U^-_0\cdots U^-_r\mu^\vee$, where $\mu^\vee$ is the endpoint of the associated polyline.
Using this description of the image and commuting rules for root subgroups, we show that the image
of cells associated to Knuth equivalent galleries have a common dense subset. Since every equivalence
class has a unique gallery associated to a semi-standard Young tableaux, the results in \cite{GL2} show that the closure
of such an image is a MV-cycle, and this correspondence semi-standard Young tableaux $\leftrightarrow$ MV-cycles
is bijective.

We conjecture that, as in the special case of Lakshmibai-Seshadri galleries in \cite{GL2}, a generalization of the theorem above
holds (after an appropriate reformulation and using \cite{Li1} with Lakshmibai-Seshadri tableaux / galleries as section instead 
of Young tableaux) for arbitrary complex semi-simple algebraic groups.

\section{Words and keys}
\label{seWandK}
%\subsection{}
We fix as alphabet the set $\bA=\{1,2,\ldots,n\}$ with the usual order $1<2<\ldots<n$. By a word we mean a finite
sequence $(i_1,\ldots,i_k)$ of elements in $\bA$, $k$ is called the length of the word. The set of words ${\mathcal W}$ forms a monoid
with the concatenation of words as an operation and with the empty word as a neutral element. We define an equivalence relation
on the set of words as follows. We say that two words $u,w$ are related by a {\it Knuth relation} if we can find a decomposition of $u$ and $w$ 
such that $v_1,v_2$ are words, $x,y,z\in\bA$ and
\begin{equation}\label{knuthrelation}
\left\{    
\begin{array}{rl}
& u=v_1      xzy v_2\text{\ and\ }    w=v_1 zxy v_2\text{\ where\ }x<y\le z\\
or & u=v_1  yzx v_2\text{\ and\ }  w=v_1     yxz v_2\text{\ where\ }x\le y < z \\
\end{array}
\right. .
\end{equation}
\begin{defn}\label{knuthequivalence}\rm
Two words are called {\it Knuth equivalent}: $u\sim_K w$, if there exists a sequence of words $u=v_0,v_1,\ldots,v_r=w$
such that $v_{j-1}$ is related to $v_{j}$ by a {Knuth relation}, $j=1,\ldots,r$.
\end{defn}
\begin{rem}\rm
The set of equivalence classes forms a monoid (called the {\it plactic monoid}, see \cite{LaS,Sch,Li1})
with the concatenation of classes of words as an operation and with the class of the empty word as a neutral element.
\end{rem}
\begin{rem}\rm
Though we mostly refer to \cite{Ful,La,LaS} for results concerning the word algebra and Knuth relations,
note that there is a difference in the way the Knuth relations are stated.
The reason for the change is the way we translate tableaux into words, see Remark~\ref{remtabword}.
\end{rem}
\begin{defn}\rm
A {\it key diagram} of {\it column shape} $(n_1,\ldots,n_r)$ is a sequence of columns of boxes aligned in the top row, having $n_1$
boxes in the right most column, $n_2$ boxes in the second right most column etc. The key diagram is called a Young diagram
if $n_1\le \ldots\le n_r$.
\end{defn}
\begin{exam}\rm
$$
D_1=\Skew(0:\ ,\ |1:\ |1:\ |1:\ )\Skew(0:\ ,\ |0:\ ,\ |1:\ |1:\ )\ ,\quad D_2=\Skew(0: \ ,\ ,\ |0:\ ,\ |0:\ |0:\ ).
$$
The key diagram $D_1$ is of column shape $(4,2,4,1)$, the Young diagram $D_2$ is of column shape $(1,2,4)$
\end{exam}
\begin{defn}\rm
A {\it key tableau} of column shape $(n_1,\ldots,n_r)$ is a filling of the corresponding key diagram with elements from the alphabet $\bA$
such that the entries are {\it strictly increasing in the columns} (from top to bottom). A {\it semistandard Young tableau} of shape
$(n_1,\ldots,n_r)$, $n_1\le \ldots\le n_r$, is a filling of the corresponding Young diagram with elements from the alphabet $\bA$
such that the entries are strictly increasing in the columns (from top to bottom) and weakly increasing in the rows (left to right).
\end{defn}
\begin{exam}\label{exam2}\rm
$$
\T_1=\Skew(0:5 ,1 |1:2 |1:3 |1:4)\Skew(0:2 ,1 |0:3 ,2 |1:3 |1:4)\ ,\quad \T_2=\Skew(0: 1 ,1 ,3 |0:2 ,4 |0:3 |0:4 ).
$$
The key  tableau  $\T_1$ is of column shape $(4,2,4,1)$, $\T_2$ is an example for a semistandard Young tableau
of column shape $(1,2,4)$.
\end{exam}
\begin{defn}\rm
Let $\T$ be a  key  tableau. The associated word $w_\T$ is the sequence $(i_1,\ldots,i_N)$ of elements in $\bA$
obtained from $\T$ by reading the entries in the key tableau boxwise, from top to bottom in each column, starting with the right most column.
\end{defn}
\begin{exam}\rm
For the key tableaux $\T_1$ and $\T_2$ in Example~\ref{exam2} we get 
$$
w_{\T_1}=(1,2,3,4,2,3,1,2,3,4,5),\quad w_{\T_2}=(3,1,4,1,2,3,4).
$$
The map $\T\mapsto w_\T$, which associates to a key tableaux the word in the alphabet $\bA$ is not bijective, in general there are
several key tableaux giving rise to the same word. All the
following key tableaux have $(1,2,3)$ as the associated word:
\begin{equation}\label{wordtableau}
\Skew(0:3,2,1)\,,\ \Skew(0:2,1|0:3)\,,\ \Skew(0:3,1|1:2)\,,\ \Skew(0:1|0:2|0:3).
\end{equation}
\end{exam}
\begin{rem}\label{remtabword}\rm
Let $\tilde w_\T$ be the word obtained from $w_\T$ by reading the word backwards, then 
$\tilde w_\T$ is the column word (the product of the column words, read from left to right, in each column from bottom to top),
which is Knuth equivalent (using the definition of the relation in \cite{LaS}) to the row word
as defined in \cite{LaS}. The change in the definition of the Knuth relation is due to the fact that
the "backwards reading map" $w\mapsto \tilde w$ respects the Knuth relations for $x<y<z$ but
not for $x=y<z$ respectively $x<y=z$ (see \cite{LaS}), but it sends equivalence classes for the relations 
in \eqref{knuthrelation} onto the equivalence classes for the Knuth relations as defined in \cite{LaS}.
\end{rem}

\begin{defn}\rm
Two key tableaux $\T,\T'$ are called {\it Knuth equivalent}: $\T\sim_K \T'$ if and only if the two associated words $w_\T$ and $w_{\T'}$ 
are Knuth equivalent, i.e.  $w_\T\sim_K w_{\T'}$.
\end{defn}
\begin{exam}\rm
The tableaux in \eqref{wordtableau} are all Knuth equivalent because they have the same associated word, but also
$$
\T_1=\Skew(0:2,1|1:3)\sim_K\T_2= \Skew(0:1,3|0:2)%\quad\text{\ because\ }\quad w_{\T_1}=(1,3,2)\sim_K w_{\T_2}=(3,1,2).
$$
because $w_{\T_1}=(1,3,2)\sim_K w_{\T_2}=(3,1,2)$.
\end{exam}
Recall (see for example \cite{Ful} or \cite{Li1}):
\begin{thm}\label{thm1}
\begin{itemize}
\item[{\it i)}] Given a key tableau $\T$, there exists a unique semistandard Young tableau $\T'$ such that $\T\sim_K \T'$.
\item[{\it ii)}] Given a word $w$ in the alphabet $\bA$, there exists a unique semistandard Young tableau $\T$ such that $w\sim_K w_\T$.
\end{itemize}
\end{thm}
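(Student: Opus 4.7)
The plan is to reduce part (i) to part (ii). Indeed, given a key tableau $\T$, apply (ii) to its associated word $w_\T$: there is a unique semistandard Young tableau $\T'$ with $w_\T \sim_K w_{\T'}$, and this is exactly the tableau required in (i), since by definition $\T \sim_K \T'$. Uniqueness of such a $\T'$ is inherited from uniqueness in (ii): any other semistandard Young tableau $\T''$ with $\T \sim_K \T''$ would satisfy $w_{\T''} \sim_K w_{\T'}$, hence by (ii) combined with the fact that the map $\T \mapsto w_\T$ is injective on semistandard Young tableaux (the columns can be read off the maximal strictly increasing runs of $w_\T$), we get $\T'' = \T'$. So the whole theorem reduces to the classical statement that each Knuth class of words contains exactly one word of the form $w_\T$ with $\T$ semistandard Young.

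For (ii) I would use Schensted's insertion algorithm, adapted to the reading convention of the paper (columns top-to-bottom, read right to left). Define a map $w \mapsto P(w)$ recursively, starting from the empty tableau and inserting the letters of $w = (i_1,\ldots,i_k)$ one by one, each bumped according to the standard Schensted rule so as to preserve semistandardness. The output $P(w)$ is a semistandard Young tableau by construction. The key lemma is that $w \sim_K w_{P(w)}$, proved by induction on $k$: writing $P = P(i_1\cdots i_{k-1})$ and $x = i_k$, one shows $w_P \cdot x \sim_K w_{P\leftarrow x}$ by repeatedly applying the elementary moves \eqref{knuthrelation} to slide $x$ through $w_P$ along the bumping path until it reaches its final position, while simultaneously displacing the bumped letters to their new positions.

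Uniqueness in (ii) is established by showing that $P$ is invariant under elementary Knuth moves; chaining then yields invariance under $\sim_K$. Writing $u = v_1 x z y v_2$, $w = v_1 z x y v_2$ with $x<y\le z$ (and symmetrically for the other type of Knuth move), the reduction is to inserting $v_1$ first, producing a common tableau $Q$, and then checking purely locally that $Q \leftarrow x \leftarrow z \leftarrow y$ equals $Q \leftarrow z \leftarrow x \leftarrow y$; the subsequent insertion of $v_2$ is then identical in both cases. This local check is a finite case analysis on how $x$, $y$, $z$ compare with the entries in the first few columns of $Q$ traversed by the bumps.

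The main obstacle is precisely the combinatorial bookkeeping in the two claims above: that one insertion step is a composition of elementary Knuth moves, and that the two Knuth-related triples in \eqref{knuthrelation} produce identical insertion tableaux. Both are classical (see \cite{Ful,La,LaS}), but care is needed because the reading convention for $w_\T$ and the form of the Knuth relations used here are mirrored with respect to the traditional ones, as explained in Remark~\ref{remtabword}. The cleanest path is to carry out the verifications directly in the paper's conventions rather than transporting the classical statements through the backwards-reading involution, which, as noted in that remark, interacts slightly subtly with the degenerate cases $x = y < z$ and $x < y = z$.
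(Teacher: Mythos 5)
The paper itself offers no proof of Theorem~\ref{thm1}: it is explicitly recalled from the literature (\cite{Ful,Li1}), with Remark~\ref{rem2} pointing to jeu de taquin or the bumping algorithm. Your sketch reconstructs exactly the classical bumping-algorithm proof that those sources contain, and the architecture is right: reduce {\it i)} to {\it ii)} via the definition of $\sim_K$ for tableaux (your appeal to injectivity of $\T\mapsto w_\T$ on semistandard tableaux is not even needed, since uniqueness in {\it ii)} is already stated at the level of tableaux); get existence from $w\sim_K w_{P(w)}$; get uniqueness from Knuth-invariance of $P$.

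There is, however, one step missing from the uniqueness argument, and it is precisely where the convention issue you flag actually bites. Knuth-invariance of $P$ only shows that $P$ is constant on each Knuth class; to conclude that a class contains at most one semistandard tableau you must also prove that $P(w_\T)=\T$ for every semistandard $\T$, and you never state this. Moreover, that identity is \emph{false} for the naive choice of insertion: row-inserting the paper's reading word of $\T_2$ from Example~\ref{exam2}, namely $(3,1,4,1,2,3,4)$, letter by letter from left to right yields the two-row tableau with rows $(1,1,2,3,4)$ and $(3,4)$, which is not even of the same shape as $\T_2$. Because the paper reads columns right to left and top to bottom --- the reverse of the usual column word, cf.\ Remark~\ref{remtabword} --- the variant that reproduces $\T$ is \emph{column} insertion of $w_\T$ read left to right (equivalently, row insertion of the reversed word), and it is this variant whose invariance under the mirrored relations \eqref{knuthrelation}, including the degenerate cases $x=y<z$ and $x<y=z$, has to be verified. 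Once you fix the insertion scheme and add the identity $P(w_\T)=\T$, your outline coincides with the standard proof the paper is implicitly relying on.
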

\begin{rem}\label{rem2}\rm
The unique semistandard Young tableau can be obtained from the given key tableau or  word by using the Jeu de Taquin algorithm 
of Lascoux and Sch\"utzenberger or the bumping algorithm, see for example \cite{Ful}. 
\end{rem}
%\begin{rem}\rm
%In terms of the associated crystal graph, Remark~\ref{rem2} 
%can be reformulated as follows: given a word $w$, there exists a unique word $w^+$ which is a vertex in ${\mathcal C}(w)$ such that $w^+$
%is a lattice permutation, which means $F_{j,w^+}(\ell)\ge 0$ for all $j=1,\ldots,n-1$ and all $\ell\in\N$ (or, equivalently $e_j(w^+)=\theta$ for all 
%$j=1,\ldots,n-1$).
%\end{rem}
\section{Galleries and keys}\label{galandkey}
For $1\le d\le n$ denote by $I_{d,n}$ the set of all strictly increasing sequences of length $d$:
$$
I_{d,n}=\{\ui=(i_1,\ldots,i_d)\mid 1\le i_1<i_2<\ldots i_d\le n\}.
$$ 
We fix a second alphabet 
$$
\bW=I_{1,n}\cup I_{2,n}\cup\ldots\cup I_{n-1,n}\cup I_{n,n}.
$$
We can identify $\bA$ with $I_{1,n}$ and hence view $\bA$ as a subset of $\bW$.
\begin{defn}\rm
A {\it gallery} is a finite sequence $\gamma= (\ui_1,\ldots,\ui_r)$ of elements in the alphabet $\bW$, $r$ is called the length of the gallery.
We say that the gallery is of type $(d_1,\ldots,d_r)$ if $\ui_1\in I_{d_1,n}$, $\ui_2\in I_{d_2,n}$, $\ldots$, $\ui_r\in I_{d_r,n}$.
\end{defn}
By the inclusion of $\bA\hookrightarrow \bW$ we can
identify words in the alphabet $\bA$ with galleries of type $(1,\ldots,1)$ (where the number of $1$'s is equal to the length of the word).
\begin{defn}\rm
Given a key tableau $\T$ of column shape $(n_1,\ldots,n_r)$, we associate to $\T$ the {\it gallery $\gamma_\T=(\ui_1,\ldots,\ui_t)$},
of type  $(n_1,\ldots,n_r)$, where the sequence $\ui_\ell=(j_1,\ldots,j_d)\in I_{d_\ell,n}$, $\ell=1,\ldots,r$, is obtained by reading the entries in the $\ell$-th column 
(counted from right to left) of the key tableau $\T$ from top to bottom.
\end{defn}
\begin{exam}\rm
For the key tableaux $\T_1$ and $\T_2$ in Example~\ref{exam2} we get the galleries
$$
\gamma_{\T_1}=((1,2,3,4),(2,3),(1,2,3,4),(5)) \ ,\quad\ \gamma_{\T_2}=((3),(1,4),(1,2,3,4)).
$$
\end{exam}
\begin{defn}\rm
Given a gallery $\gamma=(\ui_1,\ldots,\ui_r)$ of type $\ud=(d_1,\ldots,d_r)$, let $\T_\gamma$ be the key tableau of column shape $(d_1,\ldots,d_r)$
such that the entries in the $j$-th column (counted from right to left) are (counted from top to bottom) given by the sequence $\ui_j$.
\end{defn}
\begin{exam}\rm
For the gallery $\gamma=((3,6,7),(2),(2,3,4),(1,4))$ we get the key tableau:
$$
\T_\gamma=\Skew(0:1 ,2 |0:4 ,3 |1:4 )\Skew(0: 2,3|1:6|1:7).
$$
\end{exam}
One obtains a natural bijection between galleries and key tableaux, so in the following we identify often a gallery
with its key tableau and vice versa. For example, the word $w_\gamma$ associated to a gallery is the word associated
to the key tableau $\T_\gamma$, and we say that {\it two galleries are Knuth equivalent} if their corresponding key tableaux are Knuth equivalent.
\noindent
\section{The loop group $G(\ck)$ and the Kac-Moody group $\hat\cl(G)$}\label{loopgroup} 
%%%%%%%%%%%%%%%%%%%%%%%%%%%%
Let $G=SL_n(\bc)$. For a $\bc$-algebra ${\mathcal R}$
let $G({\mathcal R})$ be the set of ${\mathcal R}$--rational points of $G$, i.e., the
set of algebra homomorphisms from the coordinate ring $\bc[G]\rightarrow {\mathcal R}$.
Then $G({\mathcal R})$ comes naturally equipped again with a group structure. In our case
we can identify $SL_n({\mathcal R})$ with the set of
$n\times n$--matrices with entries in ${\mathcal R}$ and determinant 1.
 
Denote $\co=\bc[[t]]$ the ring of formal power series in one variable and let $\ck=\bc((t))$ be 
its fraction field, the field of formal Laurent series. Denote $v:\ck^*\rightarrow\bz$
the standard valuation on $\ck$ such that $\co=\{f\in\ck\mid v(f)\ge 0\}$. 
The {\it loop group} $G(\ck)$ is the set of 
$\ck$--valued  points of $G$, we denote by $G(\co)$ its subgroup of $\co$--valued points.
The latter has a decomposition as a semi-direct product $G\semi G^1(\co)$, where
we view $G\subset G(\co)$ as the subgroup of constant loops and $G^1(\co)$ is the
subgroup of elements congruent to the identity modulo $t$. Note that we can describe
$G^1(\co)$ also as the image of ${\mathfrak{sl}}_n\otimes_\bc t\bc[[t]]$ via the exponential map 
(where ${\mathfrak{sl}}_n=\Lie G$).

The {\it rotation operation} $\gamma:\bc^*\rightarrow \Aut(\ck)$, 
$\gamma(z)\big(f(t)\big)=f(zt)$ gives rise to group automorphisms
$\gamma_G:\bc^*\rightarrow \Aut(G(\ck))$, we denote $\cl(G(\ck))$ 
the semidirect product $\bc^*\semi G(\ck)$.  The rotation operation 
on $\ck$ restricts to an operation on $\co$ and hence we have a natural
subgroup $\cl(G(\co)):=\bc^*\semi G(\co)$ (for this and the following see \cite{Ku},
Chapter 13).

Let $\hat\cl(G)$ be the affine Kac-Moody group associated to
the affine Kac--Moody algebra 
$$
\hat\cl(\Lg)=\Lg\otimes \ck\oplus\bc c\oplus \bc d,
$$
where $0\rightarrow \bc c\rightarrow \Lg\otimes \ck\oplus\bc c\rightarrow
\Lg\otimes \ck\rightarrow 0$ is the universal central extension of the
{\it loop algebra} $ \Lg\otimes \ck$ and $d$ denotes the scaling element.
We have corresponding exact sequences also on the level of groups,
i.e., $\hat\cl(G)$ is a central extension of $\cl(G(\ck))$ 
$$
1\rightarrow\bc^*\rightarrow \hat\cl(G)\mapright{pr} \cl(G(\ck))\rightarrow 1
$$
(see~\cite{Ku}, Chapter 13). 

Fix as {\it maximal torus} $T\subset G$ the diagonal matrices and fix as {\it Borel subgroup} $B$
the upper triangular matrices in $G$. Denote by $B^-\subset G$ the Borel subgroup  of lower triangular matrices. 
We denote $\la\cdot,\cdot\ra$ the non--degenerate pairing between 
the {\it character group} $\Mor(T,\bc^*)$ of $T$ and the group $\Mor(\bc^*,T)$ of 
{\it cocharacters}. We identify $\Mor(\bc^*,T)$ with the quotient 
$T(\ck)/T(\co)$, so we use the same symbol $\lam^\vee$ for the cocharacter
and the point in $\cg=G(\ck)/G(\co)$. 

Let $N=N_G(T)$ be the normalizer in $G$ of the fixed maximal torus $T\subset G$, we denote
by $W$ the {\it Weyl group} $N/T$ of $G$. Let $N_\ck$ be the subgroup of $G(\ck)$ 
generated by $N$ and $T(\ck)$, the affine Weyl group is defined as $\Waff=N_\ck/T\simeq W\semi \Mor(\bc^*,T)$.
%%%%%%%%%%%%%%%%%%%%%%%%%%%%%%%%%%%%%%%%%%%%%%%%%%%%%%%%%%%%%%%%%%%%%%
%%%%%%%%%%%%%%%%%%%%%%%%%%%%%%%%%%%%%%%%%%%%%%%%%%%%%%%%%%%%%%%%%%%%%%
\section{Roots, weights and coweights}
We use for the root system and the (abstract) weights and coweights the same notation as in \cite{Bourb}: Let  $\R^{n}$ be the
real vector space equipped with the canonical basis $\epsilon_1,\ldots,\epsilon_n$ and a scalar product $(\cdot,\cdot)$
such that the latter basis is an orthonormal basis. Denote by $\A\subset\R^n$ the subspace orthogonal to $\epsilon_1+\ldots+\epsilon_n$. 
The root system $\Phi\subset \A$ and the coroot system $\Phi^\vee$
can be identified with each other:
$$
\Phi=\{\pm(\epsilon_i-\epsilon_j)\mid 1\le i<j\le n\}=\{\alpha^\vee=\frac{2\alpha}{(\alpha,\alpha)}\mid\alpha\in\Phi\}=\Phi^\vee.
%\quad R=\langle\Phi\rangle_\bz,\quad X=\bz\epsilon_1\oplus\ldots\oplus\bz \epsilon_n/\bz(\epsilon_1+\ldots+\epsilon_n).
$$
Hence we can also identify the root lattice $R$ and the coroot lattice $R^\vee$. Similarly, we can identify the abstract
weight lattice $X=\{\mu\in\A\mid  \forall \alpha^\vee\in\Phi^\vee: (\mu,\alpha^\vee)\in\bz\}$ and the abstract coweight lattice
$X^\vee=\{\mu\in\A\mid (\mu,\alpha)\in\bz\ \forall \alpha\in\Phi\}$. To avoid confusion between weights
and coweights, we write ${\lambda}\in X$ for the (abstract) weights and ${\lambda}^\vee\in X^\vee$ instead of $\lambda\in X^\vee$
 for the (abstract) coweights.

For the group $G=SL_n(\bc)$ we have $X=\Mor(T,\bc^*)$, so the character group is the full abstract weight lattice, and 
the group of cocharacters is the coroot lattice $R^\vee=\Mor(\bc^*,T)$. For the adjoint group $G'=PSL_n(\bc)$ let $p:SL_n(\bc)\rightarrow G'$
be the isogeny and set $T'=p(T)$. We have in this case
$R=\Mor(T',\bc^*)$, so the character group is the root lattice, and 
the group of cocharacters is the full lattice of abstract coweights $X^\vee=\Mor(\bc^*,T)$.

A basis of the root system is $\Pi=\{\alpha_i=\epsilon_i-\epsilon_{i+1}\mid i=1,\ldots,n-1\}$, 
let $\Phi^+$ be the corresponding set of positive roots. We often identify $\A$ with
$\R^{n}/\R(\epsilon_1+\ldots+\epsilon_n)$, the description for the fundamental weights as 
$\omega_i=\epsilon_1+\ldots+\epsilon_i$, $i=1,\ldots,n-1$ is to be understood with respect to this identification. The abbreviation
$\epsilon_\ui=\epsilon_{i_1}+\ldots+\epsilon_{i_d}$ and $\epsilon^\vee_\ui=(\epsilon_{i_1}+\ldots+\epsilon_{i_d})^\vee$
for $\ui\in I_{d,n}$ is meant in the same way and provides a natural bijection between elements of $I_{d,n}$ and 
Weyl group conjugates of $\omega_d$ respectively $\omega_d^\vee$.

Given a root and coroot datum, we have the hyperplane arrangement defined by the set 
$\ens{(\alpha,n)\ |\ \alpha \in \Phi, n \in \Z}$ of affine roots. The couple $(\alpha,n)$ 
corresponds to the real affine root $\alpha + n\delta$ with $\delta$ being the smallest positive imaginary root
 of the affine Kac-Moody algebra $\hat\cl(\Lg)$. 
We associate to an affine root $(\alpha,n)$ the affine reflection 
$s_{\alpha,n}:x^\vee \mapsto x^\vee- (\left( \alpha,x^{\vee} \right)+ n)\alpha^{\vee}$ for $x^\vee\in\A$,
and the affine hyperplane $H_{\alpha,n} = \ens{x^\vee \in \A\ |\ \left( \alpha,x^{\vee} \right) + n = 0}$. We write
\[ H_{\alpha,n}^+ = \ens{x^\vee \in \A\ |\ \left( \alpha,x^{\vee} \right) + n \geq 0} \]
for the corresponding closed positive half-space and analogously 
\[ H_{\alpha,n}^- = \ens{x^\vee \in \A\ |\ \left( \alpha,x^{\vee} \right) + n \leq 0}\]
for the negative half space.
\begin{defn}\rm
Given a gallery $\gamma=(\ui_1,\ldots,\ui_r)$ of  type $\ud$, the associated
{\it polyline of type $(d_1,\ldots,d_r)$} in $\A$ is the sequence of coweights 
$\varphi(\gamma)=(\mu^\vee_0,\mu^\vee_1,\ldots,\mu_r^\vee)$
defined by
$$
\mu^\vee_0=0,\ \mu^\vee_1=\epsilon^\vee_{\ui_1},\ \mu^\vee_2=\epsilon^\vee_{\ui_1}+\epsilon^\vee_{\ui_2},
\ \mu^\vee_2=\epsilon^\vee_{\ui_1}+\epsilon^\vee_{\ui_2}+\epsilon^\vee_{\ui_3}, \ldots,\mu^\vee_r=\epsilon^\vee_{\ui_1}+\ldots+\epsilon^\vee_{\ui_r}.
$$
We often identify the sequence with the
polyline joining successively the origin $0$ with $\mu^\vee_1$, $\mu^\vee_1$ with $\mu^\vee_2$, $\mu^\vee_2$ with $\mu^\vee_3$ etc.
The last coweight $\mu^\vee_r$ is called the coweight of the gallery.
\end{defn}
The Weyl group $W$ is the finite subgroup of $\Gl(\A)$ generated by the reflections $s_{\alpha,0}$ for $ \alpha \in \Phi$, 
the affine Weyl group $W^{\af}$ is the group of affine transformations of $\A$ generated by the affine reflections 
$s_{\alpha,n}$ for $ (\alpha,n) \in \Phi \times \Z$. 

A fundamental domain for the action of $W$ on $\A$ is given by the dominant Weyl chamber
$C^+ = \ens{ x^\vee \in \A\ |\ \left( \alpha,x^{\vee} \right) \geq 0,\ \forall \alpha \in \Phi^+}$.
Similarly, the fundamental alcove $\Delta_f = \ens{x^\vee \in \A\ |\ 0 \leq \left( \alpha,x^{\vee} \right) \leq 1,\ \forall \alpha \in \Phi^+}$
is a fundamental domain for the action of $W^{\af}$ on $\A$. 

\section{The affine Gra\ss mann variety}\label{affgrass}

Let $\hat\cl(G)$ be the affine Kac-Moody group as in section~\ref{loopgroup}.
Recall the exact sequences 
$$
1\rightarrow\bc^*\rightarrow \hat\cl(G)\mapright{pr} \cl(G(\ck))\rightarrow 1
$$
(see~\cite{Ku}, Chapter 13). Denote $P_0\subset \hat\cl(G)$ the maximal parabolic subgroup $pr^{-1}(\cl(G(\co)))$.
We have four incarnations of the affine Grassmann variety:
\begin{equation}\label{fourgrassmann}
\cg=G(\ck)/G(\co)=\cl(G(\ck))/\cl(G(\co))=\hat\cl(G)/\cp_0 = G(\bc[t,t^{-1}])/G(\bc[t]).
\end{equation} 
Note that $G(\ck)$ and $\cg$ are {\it ind}--schemes and $G(\co)$ is a group 
scheme (see \cite{Ku}, \cite{Lu}). 

Let now $G'$ be a simple complex algebraic group with the same Lie algebra ${\mathfrak{sl}}_n$ as $G$ and
let $p:G\rightarrow G'$ be an isogeny with $G'$ being simply connected. Then
$G'(\co)\simeq G'\semi (G)^1(\co)$, the natural map $p_\co:G(\co)\rightarrow G'(\co)$ is surjective
and has the same kernel as $p$. Let $T'$ be a maximal torus such that $p(T)=T'$ and consider the character group 
$\Mor(T',\bc^*)$ the group $\Mor(\bc^*,T')$ of cocharacters.
The map $p:T\rightarrow T'$ induces an inclusion $\Mor(\bc^*,T)\hookrightarrow \Mor(\bc^*,T')$. 

In particular, if we start with $PSL_n(\bc)$ (instead of $SL_n(\bc)$), then $\Mor(T',\bc^*)$ is the root lattice $R$ and
we can identify $\Mor(\bc^*,T')$ with the full coweight lattice $X^\vee$:
$$
X^\vee=\Mor(\bc^*,T')=\bz\omega^\vee_1\oplus\ldots \oplus \bz \omega^\vee_{n-1}.
$$
The quotient  $\Mor(\bc^*,T')/\Mor(\bc^*,T)$ measures the difference between $\cg$ and the affine
Gra\ss mann variety $\cg'=G'(\ck)/G'(\co)$. In fact, $\cg$ is connected, and the connected 
components of $\cg'$ are indexed by $\Mor(\bc^*,T')/\Mor(\bc^*,T)$. The natural maps 
$p_\ck:G(\ck)\rightarrow G'(\ck)$ and $p_\co:G(\co)\rightarrow G'(\co)$ induce a 
$G(\ck)$--equivariant inclusion $\cg\hookrightarrow \cg'$, which is an isomorphism onto the 
component of $\cg$ containing the class of $1$. Now $G(\ck)$ acts via $p_\ck$ on
all of $\cg'$, and each connected component is a homogeneous space for $G(\ck)$,
isomorphic to $G(\ck)/\cq$ for some parahoric subgroup $\cq$ of $G(\ck)$ which is
conjugate to $G(\co)$ by an (outer) automorphism. In the same way the action of $\hat\cl(G)$
on $\cg$ extends to an action on $\cg'$, each connected component being a homogeneous space. 

Let $ev:G'(\co)\rightarrow G'$ be the evaluation maps at $t=0$, and let $\cb_\co=ev^{-1}(B)$ be the corresponding Iwahori subgroup.
Denote by $\Bc$ the corresponding Borel subgroup of $\hat\cl(G)$.
Recall the orbit decomposition (see for example \cite{Ku}):
$$
\cg'=\bigcup_{\mu^\vee\in {X^\vee}^+} G(\co) \mu^\vee
$$
where $X^{\vee+}$ denotes the set of dominant (abstract) coweights. 

Throughout the remaining part of the paper, we study for $G=SL_n(\bc)$ the action of the groups
$G(\co)$, $G(\ck)$ (and other relevant subgroups of $G(\ck)$) on the affine Gra\ss mann variety $\cg'$ for $G'=PSL_n$.

For a dominant coweight $\lam^\vee\in {X^\vee}^+$ denote by $X_{\lam^\vee}$
the $G(\co)$-stable Schubert variety 
$$
X_{\lam^\vee}=\overline{G(\co) \lam^\vee}\subseteq \cg'.
$$

\section{Galleries and subgroups}
Let $\gamma=(\ui_1,\ldots,\ui_r)$ be a gallery of  type $\ud$ and denote  
$\varphi(\gamma)=(\mu^\vee_0,\mu^\vee_1,\ldots,\mu_r^\vee)$ the associated polyline.
Let $[\mu^\vee_{j-1},\mu^\vee_{j}]$ be the convex hull of the two vertices. 
We associate to the gallery and the vertices on the polyline some subgroups: 

Denote $P_{\mu^\vee_j}$ the
parabolic subgroup of $\hat\cl(G)$ containing $\bc^*\semi T$ and the root subgroups associated to the real roots
\begin{equation}\label{PhiPj}
{\hat{\Phi}}_{j}=\{(\alpha,n)\mid \mu_j^\vee\in H_{\alpha,n}^+ \}, \ j=0,\ldots,r.
\end{equation}
Denote $Q_{\mu^\vee_j}$ the parabolic subgroups of $\hat\cl(G)$ containing $\bc^*\semi T$ 
and the root subgroups associated to the real roots
\begin{equation}\label{PhiQj}
\hat \Phi_{j,j+1}=\{(\alpha,n)\mid [\mu^\vee_{j},\mu^\vee_{j+1}]\subset H_{\alpha,n}^+ \},\  j=0,\ldots,r-1,
\end{equation}
Note that $P_{\mu^\vee_{j}}\supset Q_{\mu^\vee_j}\subset P_{\mu^\vee_{j+1}}$. Another important subgroup of $P_{\mu^\vee_{j}}$ is 
generated by its root subgroups corresponding to roots which are negative in the classical sense:
\begin{equation}\label{PhiUj}
U^-_{\mu^\vee_j}=\langle U_{(\beta,m)}\mid \beta\prec 0, (\beta,m)\in {\hat{\Phi}}_{j}\rangle \subset P_{\mu^\vee_{j}},\  j=0,\ldots,r.
\end{equation}
%\begin{equation}\label{PhiUj}
%U^-_{\mu^\vee_j}= U^-(\ck)\cap P_{\mu^\vee_{j}},\  j=0,\ldots,r.
%\end{equation}
The following set of roots is relevant later for the description of certain cells. Let
$$
\Psi_{\gamma,j}=\{(\alpha,n)\mid \alpha\succ0, \mu^\vee_{j}\in H_{\alpha,n}, 
[\mu^\vee_{j},\mu^\vee_{j+1}]\not\subseteq H_{\alpha,n}^- \}
$$
and set
$$
{\mathbb U}^-_{\gamma,j}=\prod_{(\alpha,n)\in \Psi_{\gamma,j}} U_{(-\alpha,-n)} \subset U^-_{\mu^\vee_{j}}.
$$
We associate to the gallery the product of root subgroups
$$
\bU^-_{\gamma}= \bU^-_{\gamma,0}\times \bU^-_{\gamma,1}\times\cdots\times \bU^-_{\gamma,r-1}.
$$
\section{Galleries and Bott-Samelson varieties}\label{galleryandBottSamelson}
For a gallery $\gamma=({\ui_1},{\ui_2},\ldots,{\ui_r})$ of type $\ud=(d_1,\ldots,d_r)$
let $\varphi(\gamma)=(\mu^\vee_0,\ldots,\mu_r^\vee)$ be the associated polyline.
Recall that $P_{\mu^\vee_{j}}\supset Q_{\mu^\vee_j}\subset P_{\mu^\vee_{j+1}}$, so we can associate to $\gamma$ the Bott-Samelson variety
\begin{equation}\label{BottSamelson1}
\Sigma_\ud= P_{\mu^\vee_0}\times_{Q_{\mu^\vee_0}} P_{\mu^\vee_1}\times_{Q_{\mu^\vee_1}}\times \ldots
\times_{Q_{\mu^\vee_{r-1}}} 
P_{\mu^\vee_{r}}/P_{\mu^\vee_{r}},
\end{equation}
which is defined as the quotient of $P_{\mu^\vee_0}\times P_{\mu^\vee_1}\times  \ldots \times P_{\mu^\vee_{r}}$
by $Q_{\mu^\vee_0} \times Q_{\mu^\vee_1}\times \ldots \times Q_{\mu^\vee_{r-1}}\times P_{\mu^\vee_{r}}$
with respect to the action given by:
$$
(q_0,q_1,..., q_{r-1}, q_r)\circ (p_0,p_1,...,p_r)  = (p_0q_0,q_0^{-1}p_1q_1,..., q_{r-1}^{-1}p_r q_r).
$$
The fibred product $\Sigma_\ud$ is a smooth projective complex variety, its points are denoted by $[p_0,\ldots,p_r]$. 

Corresponding to the parabolic subgroups $Q_{\mu^\vee_j}, P_{\mu^\vee_{j}}$ of $\hat\cl(G)$ one has the parahoric subgroups
$\Pc_{\mu^\vee_j},\Qc_{\mu^\vee_j}$ of $G(\ck)$ such that $P_{\mu^\vee_{j}}=pr^{-1}(\bc^*\semi\Pc_{\mu^\vee_j})$ and 
$Q_{\mu^\vee_{j}}=pr^{-1}(\bc^*\semi\Qc_{\mu^\vee_j})$. We get another description of $\Sigma_\ud$ as
\begin{equation}\label{BottSamelson2}
\Sigma_\ud= \Pc_{\mu^\vee_0}\times_{\Qc_{\mu^\vee_0}} \Pc_{\mu^\vee_1}\times_{\Qc_{\mu^\vee_1}}\times \ldots\times_{\Qc_{\mu^\vee_{r-1}}} \Pc_{\mu^\vee_{r}}/\Pc_{\mu^\vee_{r}}.
\end{equation}
%\begin{rem}\label{isobottsamelson}\rm
Set ${\mathcal F}_\gamma=\hat\cl(G)/P_{\mu^\vee_0}
\times \hat\cl(G)/P_{\mu^\vee_{1}}\times\cdots\times \hat\cl(G)/P_{\mu^\vee_{r}}$ and consider the 
natural morphism $\iota:\Sigma_\ud\rightarrow {\mathcal F}_\gamma$ given by sending the class $[p_0,p_1,\ldots,p_r]$ to the sequence
$$
(P_{\mu^\vee_0}, p_0Q_{\mu^\vee_0}p_0^{-1}, p_0P_{\mu^\vee_1}p_0^{-1}, p_0p_1Q_{\mu^\vee_1}p_1^{-1}p_0^{-1}, \ldots,
p_0\cdots p_{r-1}P_r p^{-1}_{r-1}\cdots p^{-1}_0)\in {\mathcal F}_\gamma.
$$
The image of $\Sigma_\ud$ can be described as the set of sequences of parabolic subgroups (see for example \cite{GL2}) 
satisfying the following inclusion relations:
\begin{equation}\label{BottSamelson3}
\left\{(P_{\mu^\vee_0},Q_0,\ldots,P_r)\mid %\forall j=1,\ldots,r:
\begin{array}{l}
P_{\mu^\vee_0}\supset Q_0\subset P_1\supset \ldots \subset P_r\\
 P_j\text{\ is conjugate to $P_{\mu^\vee_{j}}$,}\forall j=1,\ldots,r\\
 Q_j\text{\ is conjugate to $Q_{\mu^\vee_{j}}$}\forall j=0,\ldots,r-1.
\end{array}\right\}
\end{equation}
The morphism $\iota$ is an isomorphism onto the image.
If $\gamma'$ is a gallery of the same type as $\gamma$ with associated polyline $\varphi(\gamma')=({\mu'}^\vee_0,\ldots,{\mu'}_r^\vee)$,
then $\mu^\vee_0={\mu'}^\vee_0=0$, ${\mu'}^\vee_1$ is conjugate to ${\mu}^\vee_1$ by an element $\tau_0$ in the Weyl group of $P_{\mu_0}$ and hence
$P_{\mu'_1}=\tau_0P_{\mu_1}\tau_0^{-1}$,
${\mu'}^\vee_2$ is conjugate to $\tau_0({\mu}^\vee_2)$ by an element $\tau_1$ in the Weyl group of $P_{\mu'_1}=\tau_0P_{\mu_1}\tau_0^{-1}$
and hence $P_{\mu'_2}=\tau_1\tau_0 P_{\mu_2}\tau_0^{-1}\tau_1^{-1}$ etc.

So $\gamma$ and $\gamma'$ define in \eqref{BottSamelson3} the same variety and hence $\Sigma_\ud$ depends only on the type $\ud$ and not on the gallery. 
We say that we consider $\Sigma_\ud$ with center $\gamma$
if we want to emphasize that the  parabolic subgroups in \eqref{BottSamelson1} correspond to the coweights lying on the polyline associated to $\gamma$.
%The notion is justified by the fact that the class $[1,1,\ldots,1]$ in $\Sigma_\ud$ corresponds to the sequence of parabolic subgroups
%$P_{\mu^\vee_0}\supset {Q_{\mu^\vee_1}}\subset  P_{\mu^\vee_1}\supset\ldots \subset P_{\mu^\vee_{r-1}} \supset{Q_{\mu^\vee_{r}}}  \subset P_{\mu^\vee_{r}}$.
%\end{rem}

For a fixed type $\ud$ let $\T_0$ be the unique key tableau of column shape $\ud$ having as filling only $1$'s in the top row, $2$'s in the second row etc. Denote by $\gamma_0$ the associated gallery and let $\varphi(\gamma_0)=(\mu^\vee_0,\ldots,\mu^\vee_r)$ be the associated polyline. 
The variety $\Sigma_\ud$ is by \eqref{BottSamelson2} naturally endowed with a $T$-action. 
\begin{lem}\label{lemma1}
We consider $\Sigma_\ud$ with center $\gamma_0$. There exists a natural bijection between $T$-fixed points in 
$\Sigma_\ud$ and galleries of type $\ud$ such that 
the gallery $\gamma_0$ corresponds to the class $[1,\ldots,1]$ in 
$P_{\mu^\vee_0}\times_{Q_{\mu^\vee_0}} \ldots\times_{Q_{\mu^\vee_{r-1}}} P_{\mu^\vee_{r}}/P_{\mu^\vee_{r}}$.
\end{lem}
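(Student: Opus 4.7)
The plan is to exploit the realization of $\Sigma_\ud$ given in \eqref{BottSamelson3} as sequences of parabolic subgroups with prescribed conjugacy classes and inclusion relations. A point $[p_0,\ldots,p_r]\in\Sigma_\ud$ is $T$-fixed if and only if every parabolic subgroup in its associated sequence is $T$-stable, because the $T$-action on $\mathcal F_\gamma$ is by conjugation on each factor and $\iota$ is a $T$-equivariant closed immersion. So the task reduces to enumerating the $T$-stable sequences $(P_{\mu^\vee_0},Q_0,P_1,\ldots,Q_{r-1},P_r)$ that satisfy the inclusions $P_{\mu^\vee_0}\supset Q_0\subset P_1\supset\ldots\subset P_r$ and the conjugacy class conditions of \eqref{BottSamelson3}.

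Next I would classify $T$-stable parabolic subgroups in the relevant conjugacy classes. For any integral coweight $\nu^\vee\in X^\vee$, the subgroups $P_{\nu^\vee}$ defined by \eqref{PhiPj} automatically contain $T$, hence are $T$-stable. Conversely, any $T$-stable parabolic conjugate to $P_{\mu^\vee_j}$ is of the form $wP_{\mu^\vee_j}w^{-1}$ with $w\in N_{\hat\cl(G)}(T)$, and using the affine Weyl group description this is exactly $P_{\nu^\vee}$ for $\nu^\vee=w(\mu^\vee_j)$ in the $\Waff$-orbit of $\mu^\vee_j$. Since $\mu^\vee_j$ is a vertex of the affine hyperplane arrangement (integral coweights are special vertices in type $A$), the root set ${\hat\Phi}_j$ recovers $\nu^\vee$ uniquely from $P_{\nu^\vee}$. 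The same argument applied to \eqref{PhiQj} shows that $T$-stable parabolics conjugate to $Q_{\mu^\vee_j}$ correspond bijectively to edges $[\nu^\vee,{\nu'}^\vee]$ in the $\Waff$-orbit of $[\mu^\vee_j,\mu^\vee_{j+1}]$.

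Assembling these observations, a $T$-fixed sequence is precisely the data of a polyline $(\nu^\vee_0,\nu^\vee_1,\ldots,\nu^\vee_r)$ with $\nu^\vee_0=0$, each $\nu^\vee_j$ in the $\Waff$-orbit of $\mu^\vee_j$, and each edge $[\nu^\vee_{j-1},\nu^\vee_j]$ in the $\Waff$-orbit of the corresponding edge of $\gamma_0$; the inclusions $P_{\mu^\vee_0}\supset Q_0\subset P_1\supset\ldots$ translate into the fact that the $\nu^\vee_j$ form a connected polyline and are automatic from the definitions \eqref{PhiPj} and \eqref{PhiQj}. The edge condition forces $\nu^\vee_j-\nu^\vee_{j-1}\in W\cdot\omega^\vee_{d_j}$, so there is a unique $\ui_j\in I_{d_j,n}$ with $\nu^\vee_j-\nu^\vee_{j-1}=\epsilon^\vee_{\ui_j}$. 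This produces the desired gallery $\gamma'=(\ui_1,\ldots,\ui_r)$ of type $\ud$, and the inverse construction is straightforward: starting from a gallery $\gamma'$ one associates the sequence $(P_{\nu^\vee_0},Q_{\nu^\vee_0},P_{\nu^\vee_1},\ldots,P_{\nu^\vee_r})$ built from its polyline, and checks it lies in the image of $\iota$. Under this bijection, the gallery $\gamma_0$ yields the tautological sequence of parabolics used to define $\Sigma_\ud$, which is exactly $\iota([1,\ldots,1])$.

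The main obstacle I anticipate is the uniqueness in the second step: one must verify carefully that the correspondence $\nu^\vee\mapsto P_{\nu^\vee}$ is injective on the $\Waff$-orbit of $\mu^\vee_j$ (so that polylines are truly recoverable from sequences of parabolic subgroups rather than merely from their facets), and analogously that edges are recovered from the $Q$'s. This rests on the specific choice $G=SL_n(\bc)$ and on the fact that the $\mu^\vee_j$ are integral coweights, i.e., special vertices of the affine Coxeter complex; this is where the nicer combinatorics of type $A$ enters.
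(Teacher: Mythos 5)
Your argument is essentially correct, but it takes a genuinely different route from the one in the paper. The paper's proof uses the tower of $T$-equivariant, Zariski-locally trivial fibrations $\Sigma_\ud\rightarrow\cdots\rightarrow P_{\mu^\vee_0}/Q_{\mu^\vee_0}$ with fibres $P_{\mu^\vee_j}/Q_{\mu^\vee_j}$ to reduce the fixed-point count to that of the product $\prod_{j} P_{\mu^\vee_j}/Q_{\mu^\vee_j}$, and then identifies each factor with the finite-dimensional flag variety $SL_n(\bc)/P_{\omega^\vee_{d_j}}$, whose $T$-fixed points are classically indexed by $I_{d_j,n}$; since a gallery of type $\ud$ is by definition a tuple in $\prod_j I_{d_j,n}$, the bijection drops out immediately. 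You instead work in the parabolic-sequence model \eqref{BottSamelson3} and classify the $T$-stable members of each conjugacy class, recovering a fixed point as a polyline of the given type in the apartment --- this is exactly the building-theoretic picture that the introduction mentions (``the variety of all galleries of the same type as $\gamma$ starting in $0$'') but deliberately avoids. Your route yields slightly more, namely a direct identification of each fixed point with its combinatorial polyline $(\nu^\vee_0,\ldots,\nu^\vee_r)$, which is the form in which the fixed points are actually used later. The price is the input you yourself flag: one must know that every parabolic of $\hat\cl(G)$ containing the finite torus $T$ and conjugate to $P_{\mu^\vee_j}$ (resp.\ $Q_{\mu^\vee_j}$) is conjugate to it by an element of $N(T)$ --- equivalently, that the $T$-fixed points of $\hat\cl(G)/P_{\mu^\vee_j}$ agree with the $\hat T$-fixed points and are indexed by $\Waff$ modulo the parabolic's Weyl group. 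This rests on the conjugacy of maximal tori inside parahoric subgroups, a standard but not entirely free fact, whereas the paper's reduction only invokes the classical description of $(SL_n(\bc)/P)^T$. Your injectivity worry is handled correctly: the vertices $\mu^\vee_j$ are integral coweights, hence special vertices in type $A$, so $\nu^\vee\mapsto P_{\nu^\vee}$ is injective on the relevant orbit. Both arguments send $[1,\ldots,1]$ to the tautological sequence $(P_{\mu^\vee_0},Q_{\mu^\vee_0},\ldots,P_{\mu^\vee_r})$, i.e.\ to the dominant polyline of $\gamma_0$, so the normalization statement agrees.
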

\begin{proof} We have a sequence of projections of the fibered spaces:
\begin{equation}
\label{ fibrations}
\Sigma_\ud\mapri{\phi_{r-1}} P_{\mu^\vee_0}\times_{Q_{\mu^\vee_0}} \ldots\times_{Q_{\mu^\vee_{r-3}}} P_{\mu^\vee_{r-2}}/Q_{\mu^\vee_{r-2}}
\mapri{\phi_{r-2}}\cdots\mapri{\phi_{2}} P_{\mu^\vee_0}\times_{Q_{\mu^\vee_0}}P_{\mu^\vee_1}/{Q_{\mu^\vee_1}}\mapri{\phi_{1}} P_{\mu^\vee_0}/{Q_{\mu^\vee_0}}
\end{equation}
such that the fibres of $\phi_j$ are isomorphic to $P_{\mu^\vee_j}/{Q_{\mu^\vee_{j}}}$.
The fibrations are locally trivial in the Zariski topology and equivariant with respect to the $T$-action.
It is now easy to see that the $T$-fixed points in $\Sigma_\ud$ are in a natural one-to one correspondence
with $T$-fixed points in 
$$
P_{\mu^\vee_0}/Q_{\mu^\vee_0}\times P_{\mu^\vee_1}/Q_{\mu^\vee_1}\times \ldots \times P_{\mu^\vee_{r-1}}/Q_{\mu^\vee_{r-1}}.
$$
%Since we are in the type ${\tt A}$-case, all fundamental coweights are minuscule. 
Let  $L_ {\mu^\vee_j}\subset P_{\mu^\vee_j}$ be the Levi subgroup containing $\bc^*\semi T$, then the semisimple
part $SL_ {\mu^\vee_j}$ of $L_ {\mu^\vee_j}$ is isomorphic to $SL_n(\bc)$. With respect to this isomorphism, 
the intersection $P=SL_ {\mu^\vee_j}\cap Q_{\mu^\vee_j}$ is a maximal parabolic subgroup of $SL_ {\mu^\vee_j}$ such that
$$
P_{\mu^\vee_j}/Q_{\mu^\vee_j}\simeq SL_ {\mu^\vee_j}/P\simeq SL_n(\bc)/P_{\omega^\vee_{d_j}}.
$$
The $T$-fixed points of this variety are naturally indexed by elements in $I_{d_{j},n}$, $j=1,\ldots,r$,
proving the claim.  
\end{proof}
%Sometimes it is more convenient to consider the following gallery $\gamma_f=(\epsilon_{\uj^1},\ldots,\epsilon_{\uj^r})$. It uniquely characterized
%by the fact that is of the same type as $\gamma$, and all coweights lying on the corresponding polyline are vertices of the fundamental alcove.
%So $\mu_{f,0}^\vee=0$, $\mu_{f,1}^\vee=\epsilon^\vee_{\uj^1}$, $\mu_{f,2}^\vee=\epsilon^\vee_{\uj^1}+\epsilon^\vee_{\uj^2}$, $\ldots$ are vertices
%of $\Delta_f$, and the edge (of the broken path corresponding to $\gamma_f$) joining $\mu_{f,j-1}^\vee$ and $\mu_{f,j}^\vee$ is conjugate under 
%the action of the affine Weylgroup $\Waff$ to the edge (of the broken path corresponding to $\gamma_f$) 
%joining $\mu_{j-1}^\vee$ and $\mu_{j}^\vee$.

\section{MV-cycles and galleries}\label{mvgallery}
We fix a type $\ud=(d_1,\ldots,d_r)$. Since the Bott-Samelson variety $\Sigma_\ud$
depends only on the type and not on the choice of gallery, we fix during this section for convenience $\gamma$
such that the associated polyline is wrapped around the fundamental alcove, i.e. if $\gamma=(\ui_1,\ldots,\ui_r)$
and $\varphi(\gamma)=(\mu^\vee_0=0,\mu^\vee_1,\ldots, \mu^\vee_r)$, then all the coweights $\mu^\vee_0,\mu^\vee_1,\ldots, \mu^\vee_r$
are vertices of the fundamental alcove. Since the type of the gallery is fixed, there exists only one gallery of this type with this property. 

Set $\lambda^\vee=\sum_{j=1}^r\omega_{d_j}^\vee$ and consider the Bott-Samelson variety of center $\gamma$:
$$
\Sigma_\ud= P_{\mu^\vee_0}\times_{Q_{\mu^\vee_0}} P_{\mu^\vee_1}\times_{Q_{\mu^\vee_1}}\times \ldots\times_{Q_{\mu^\vee_{r-1}}} P_{\mu_r^\vee}/P_{\mu_r^\vee}.
$$
Note that for this choice of $\gamma$ all the associated parabolic subgroups are standard parabolic subgroups, i.e. $\Bc\subset P_{\mu^\vee_j}, Q_{\mu^\vee_j}$.

The last parabolic subgroup $P_{\mu_r^\vee}$ in the fibered product corresponds to a maximal standard parabolic subgroup of $\hat\cl(G)$.
%Note that these are all conjugate to $P_0$ (since we a in the type ${\tt A}$ case) by an (possibly outer) automorphism. 
In $\cg'$, this group corresponds to the point $\mu_r^\vee$
(i.e. $P_{\mu_r^\vee}\subset \hat\cl(G)$ is the stabilizer of this point with respect to the action of $\hat\cl(G)$ on $\cg'$), so we get a canonical map
$$
\begin{array}{rcccl}
\pi&:&\Sigma_\ud= P_{\mu^\vee_0}\times_{Q_{\mu^\vee_0}} P_{\mu^\vee_1}\times_{Q_{\mu^\vee_1}}\times \ldots\times_{Q_{\mu^\vee_{r-1}}} P_{\mu_r^\vee}/P_{\mu_r^\vee}
&\rightarrow &\cg'\\
&&[p_0,\ldots,p_{r-1},p_r]&\mapsto&p_0\cdots p_{r-1}\mu_r^\vee.
\end{array}
$$
It is well known that the image $\pi(\Sigma_{\ud})$ is the Schubert variety $X_{\lambda^\vee}$, 
and the map is in fact a desingularization (see for example \cite{GL2}).

Let $\eta:\bc^*\rightarrow T$ be a generic anti-dominant one parameter subgroup. $\Sigma_\ud$ is a smooth
projective variety endowed with a $T$-action and hence a $\eta$-action. Since $\eta$ is generic,
the $T$-fixed points are the same as the $\eta$-fixed points, so the latter are naturally indexed by galleries of type $\ud$
(see Lemma~\ref{lemma1}). 
To each fixed point $y_\zeta$ corresponding to a gallery $\zeta$
of type $\ud$, we can associate the Bia{\l}ynicki-Birula cell \cite{BB}:
$$
C_\zeta=\{x\in\Sigma_\ud\mid \lim_{t\rightarrow 0} \eta(t)x=\zeta\}.
$$
Recall that $\Sigma_\ud$ is the disjoint union $\bigcup C_\zeta$, where $\zeta$ is running over the set of all galleries of type $\ud$.
 
The Schubert variety $X_{\lam^\vee}$ has a similar decomposition, but which is in general not given by cells.
Let $U^-$ be the unipotent radical of $B^-$. Given a coweight $\mu^\vee$, the closure of an irreducible component of
$$
({U^-(\ck).\mu^\vee)\cap (G(\co).{\lambda^\vee}})\subset \cg'
$$
is called a Mirkovi\'c-Vilonen cycle, or short just MV-cycle \cite{MV} of coweight $\mu^\vee$ in $X_{\lambda^\vee}$. It is known that the intersection is nonempty
if and only if in the finite dimensional irreducible representation of $SL_n(\bc)$ (viewed as Langlands dual group of $PSL_n(\bc)$) 
of highest weight $\lambda^\vee$ the weight space corresponding to $\mu^\vee$ is non-zero. Further, the number of irreducible
components is equal to the dimension of this weight space.
One has a disjoint union
$$
X_{\lam^\vee}=\bigcup_{\mu^\vee\in X^\vee}( X_{\lam^\vee}\cap U^-(\ck).\mu^\vee),
$$ 
and since $\lim_{t\rightarrow 0} \eta(t)u\eta(t)^{-1}=1$ for all $u\in U^-(\ck)$, it follows 
that 
$$
X_{\lam^\vee}\cap U^-(\ck).\mu^\vee=\{y\in X_{\lam^\vee}\mid  \lim_{t\rightarrow 0} \eta(t)y=\mu^\vee\}.
$$
The desingularization map $\pi:\Sigma_\ud\rightarrow X_{\lambda^\vee}$ is $T$-equivariant, hence if $\mu^\vee$ is the coweight of a gallery $\gamma$ of type $\ud$,
then the image $\pi(C_\zeta)$ of the cell is contained in $X_{\lam^\vee}\cap U^-(\ck).\mu^\vee$. Since
the image is irreducible, it is contained in some MV-cycle. 
%%%%%%%%%%%%%%%%%%%%%%%%%%%%%%%%%%%%%%%%%%%%%%%%%%%%%%%%%%%%%
\section{Main Theorem}
Let $\gamma$ be a gallery of type $\ud$ with associated polyline $\varphi(\gamma)=(\mu_0^\vee,\ldots,\mu_r^\vee)$, 
set $\lam^\vee=\sum_{j=1}^r \omega_{d_j}^\vee$ and denote by
$\pi_{\ud}:\Sigma_\ud\rightarrow X_{\lam^\vee}$ the desingularization map. Let $\T$ be the unique
semistandard Young tableau such that the words $w_\gamma$ and $w_\T$ are Knuth equivalent.
Let $\underline{\mathbf c}=(c_1,\ldots,c_s)$ be the shape of  $\T$, set $\nu^\vee=\sum_{j=1}^s \omega_{c_j}^\vee$
and let $X_{\nu^\vee}\subseteq X_{\lam^\vee}$ be the corresponding Schubert variety.
\begin{thm}\label{mainthm1}
\begin{itemize}
\item[{\it a)}]The closure $\overline{\pi_{\ud}(C_\gamma)}\subseteq X_{\lam^\vee}$ is a MV-cycle for the (possibly smaller) Schubert variety 
$X_{\nu^\vee}\subseteq X_{\lam^\vee}$. The weight of the MV-cycle is $\mu_r^\vee$.
\item[{\it b)}]
Given a second gallery $\gamma'$ of type $\ud'$, set ${\lam'}^\vee=\sum_{j=1}^r \omega_{d'_j}^\vee$
and let $\pi_{\ud'}:\Sigma_{\ud'}\rightarrow X_{{\lam'}^\vee}$ be the desingularization map. The following conditions are equivalent:
\begin{itemize}
\item[{\it i)}] $\gamma$ and $\gamma'$ are Knuth equivalent.
\item[{\it ii)}] $\overline{\pi_{\ud}(C_\gamma)}=\overline{\pi_{\ud'}(C_{\gamma'})}$
\end{itemize}
\end{itemize}
\end{thm}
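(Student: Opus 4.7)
The plan is to prove the two parts together by first obtaining a concrete description of the image $\pi_\ud(C_\gamma)$ as a product of root subgroups acting on $\mu_r^\vee$, and then reducing Knuth equivalence to commutation relations among these root subgroups. Specifically, using the Bia\l ynicki-Birula decomposition of $\Sigma_\ud$ (see Lemma~\ref{lemma1}) and the factorization provided by the fibration sequence, I expect to prove that
\[
\pi_\ud(C_\gamma) \;=\; \bU^-_{\gamma,0}\,\bU^-_{\gamma,1}\cdots \bU^-_{\gamma,r-1}\cdot\mu_r^\vee \;\subset\; U^-(\ck).\mu_r^\vee.
\]
The inclusion $\supseteq$ uses that each $\bU^-_{\gamma,j}$ consists of root subgroups whose corresponding affine roots stabilize $\mu_j^\vee$ but not $\mu_{j+1}^\vee$, so acting by them produces elements of $C_\gamma$ whose $\eta$-limit is the sequence $(\mu_0^\vee,\ldots,\mu_r^\vee)$. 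The opposite inclusion comes from dimension counting combined with the observation that $\dim C_\gamma$ matches $\sum_j |\Psi_{\gamma,j}|$ and $\pi_\ud$ is $T$-equivariant.

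Next, I would analyze the two elementary Knuth moves in \eqref{knuthrelation} locally. If $\gamma'$ is obtained from $\gamma$ by a single Knuth move, the associated polylines agree outside a short segment of length three and, most importantly, have the same endpoint $\mu_r^\vee$. So it suffices to show that the two products of root subgroups $\bU^-_{\gamma,j}\bU^-_{\gamma,j+1}\bU^-_{\gamma,j+2}$ and $\bU^-_{\gamma',j}\bU^-_{\gamma',j+1}\bU^-_{\gamma',j+2}$ have a common open dense subset inside $U^-(\ck)$. This should follow by an explicit case-by-case analysis of the two Knuth patterns ($x<y\le z$ and $x\le y<z$) together with Chevalley commutation relations among the relevant affine root subgroups; the patterns are rigged precisely so that the affine roots involved commute or generate the same subgroup up to a lower-dimensional correction. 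Passing to closures gives $\overline{\pi_\ud(C_\gamma)}=\overline{\pi_{\ud'}(C_{\gamma'})}$, and iterating yields $(i)\Rightarrow(ii)$ of part (b).

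For part (a) and the reverse direction of (b), I would invoke Theorem~\ref{thm1}: every equivalence class contains a unique semistandard Young tableau $\T$. Applying the previous step to the chain of Knuth moves from $\gamma$ to $\gamma_\T$, I get $\overline{\pi_\ud(C_\gamma)} = \overline{\pi_{\underline{\mathbf c}}(C_{\gamma_\T})}$. Now galleries attached to semistandard Young tableaux are, by construction, Lakshmibai-Seshadri type galleries for the Schubert variety $X_{\nu^\vee}$, so the result of \cite{GL2} applies and identifies $\overline{\pi_{\underline{\mathbf c}}(C_{\gamma_\T})}$ with a specific MV-cycle in $X_{\nu^\vee}$ of weight $\mu_r^\vee$. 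This establishes part (a), and identifies each equivalence class with exactly one MV-cycle.

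For the converse $(ii)\Rightarrow(i)$ in part (b), if $\overline{\pi_\ud(C_\gamma)}=\overline{\pi_{\ud'}(C_{\gamma'})}$, then the associated unique semistandard Young tableaux $\T$ and $\T'$ must give the same MV-cycle by the reduction above. But by \cite{GL2}, distinct semistandard Young tableaux of weight $\mu_r^\vee$ in a Schubert variety $X_{\nu^\vee}$ parametrize distinct MV-cycles (the count agrees with the weight multiplicity, and the map is known to be bijective). Hence $\T=\T'$, so $w_\gamma\sim_K w_{\T}=w_{\T'}\sim_K w_{\gamma'}$. The main obstacle I anticipate is the verification of the common-dense-subset property for the two Knuth moves; this requires identifying precisely which elements of $\Psi_{\gamma,j}\cup\Psi_{\gamma,j+1}\cup\Psi_{\gamma,j+2}$ get permuted versus genuinely replaced, and checking that the Chevalley commutators stay inside the stabilizer of $\mu_r^\vee$ modulo a codimension-one locus — this is where the asymmetry $x<y\le z$ versus $x\le y<z$ (as opposed to the symmetric $x<y<z$ case) becomes essential.
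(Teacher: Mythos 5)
Your overall strategy is the one the paper follows: describe $\pi_{\ud}(C_\gamma)$ as $\bU^-_{\gamma,0}\cdots\bU^-_{\gamma,r-1}\cdot\mu_r^\vee$ (this is Corollary~\ref{coro1image}, resting on Proposition~\ref{celldescrition} from \cite{GL2}), handle the two elementary Knuth moves by Chevalley commutator manipulations that produce a common dense subset, and then quote \cite{GL2} for the fact that semistandard Young tableau galleries give MV-cycles bijectively, which yields part {\it a)} and the implication {\it ii)}$\Rightarrow${\it i)} via Theorem~\ref{thm1}. However, there is a genuine gap in the middle of your argument: the elementary Knuth moves of \eqref{knuthrelation} are defined on \emph{words}, i.e.\ on galleries of type $(1,\ldots,1)$, whereas the theorem compares galleries $\gamma$ and $\gamma'$ of arbitrary and possibly different types $\ud$ and $\ud'$. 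Your statement that ``the associated polylines agree outside a short segment of length three'' only makes sense when both galleries are words; for a gallery of general type the polyline has a different number of vertices altogether, and the passage from $\gamma$ to the semistandard tableau gallery $\gamma_\T$ changes the type yet again. What is missing is the reduction proved in Proposition~\ref{wordgleichimage}: the image $Z_\gamma=\pi_{\ud}(C_\gamma)$ depends only on the word $w_\gamma$, i.e.\ it coincides with the image of the cell of $w_\gamma$ viewed as a gallery of type $(1,\ldots,1)$. This is not automatic; the paper proves it by sandwiching, at each vertex $\mu_k^\vee$, the group $\bU^-_{\gamma,k}$ between the product $\bU^-_{w_\gamma,(k,0)}\cdots\bU^-_{w_\gamma,(k,d_k-1)}$ and the full group $U^-_{\mu_k^\vee}$, using the two alternative descriptions of the image in Corollary~\ref{coro1image}. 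Without this step your local analysis never applies to the galleries in the statement.

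A secondary, more technical point: the two local products $\bU^-_{\gamma,j}\bU^-_{\gamma,j+1}\bU^-_{\gamma,j+2}$ for the two sides of a Knuth move do \emph{not} by themselves have a common dense subset inside $U^-(\ck)$. In the paper's Lemmas~\ref{knuthinbuildung0}--\ref{knuthinbuildung2} the leftover commutator factor (e.g.\ $U_{(-\epsilon_y+\epsilon_z,\,p^r_{y,z})}(s)$) is disposed of either because it lies in the stabilizer $P_{\nu^\vee}$ of the endpoint (when the move occurs at the end of the word) or because it is absorbed into one of the subsequent factors $U^-_{\mu^\vee_{r+2}},\ldots$ appearing to the right of the local window. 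So the common dense subset is a subset of the full product acting on $\mu_r^\vee$, flanking factors included, not of the three-term window; your plan should be adjusted accordingly, and the absorption into the later $U^-_{\mu^\vee_k}$ is again where the two descriptions of the image in Corollary~\ref{coro1image} (and the stability statement of Proposition~\ref{proposition3}) are used.
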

As a consequence we get a geometric interpretation of the Knuth relations:
\begin{coro}
For  two words $w_1,w_2$ of the same length $N$ in the alphabet $\bA$ the following conditions are equivalent:
\begin{itemize}
\item[${\it i)}$] $w_1$ and $w_2$ are Knuth equivalent
\item[${\it ii)}$] $\overline{\pi(C_{w_1})}=\overline{\pi(C_{w_2})}$, where $\pi$ is the canonical map $\pi:\Sigma_{N\epsilon_1^\vee}\rightarrow X_{N\epsilon_1^\vee}$,
and the words $w_1,w_2$ are viewed as galleries of type $\underbrace{(1,1,\ldots,1)}_N$. 
\end{itemize}
\end{coro}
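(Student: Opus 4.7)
The plan is to reduce the general case to that of galleries arising from semistandard Young tableaux, where the main result of \cite{GL2} applies, and to propagate the result along chains of Knuth moves by analysing the cells through their root-subgroup descriptions. The starting point is the explicit formula
$$
\pi_{\ud}(C_\gamma) \;=\; \bU^-_{\gamma,0}\,\bU^-_{\gamma,1}\cdots\bU^-_{\gamma,r-1}\cdot\mu_r^\vee,
$$
which I would establish by induction on $r$ using the tower of locally trivial fibrations from the proof of Lemma~\ref{lemma1}: each fibre $P_{\mu^\vee_j}/Q_{\mu^\vee_j}$ is identified with a partial flag variety $SL_n(\bc)/P_{\omega^\vee_{d_j}}$, and the $\eta$-cell through the $T$-fixed point corresponding to $\ui_j$ is the Schubert cell parametrised by the root subgroups indexed by $\Psi_{\gamma,j}$, namely $\bU^-_{\gamma,j}$.

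The second and main step is to prove that two galleries differing by a single Knuth move produce cells whose images have the same closure. Such a move alters the polyline only on two consecutive edges, so all factors $\bU^-_{\gamma,\ell}$ for $\ell\ne j,j+1$ coincide for $\gamma$ and the modified gallery $\gamma'$. It therefore suffices to verify, for each of the four Knuth cases ($xzy\leftrightarrow zxy$ with $x<y\le z$ and $yzx\leftrightarrow yxz$ with $x\le y<z$), that
$$
\overline{\bU^-_{\gamma,j}\,\bU^-_{\gamma,j+1}\cdot\mu^\vee_{j+2}}
\;=\;
\overline{\bU^-_{\gamma',j}\,\bU^-_{\gamma',j+1}\cdot\mu^\vee_{j+2}}.
$$
The strategy is to enumerate the affine roots in $\Psi_{\gamma,j}\cup\Psi_{\gamma,j+1}$ against those of $\gamma'$ and to reshuffle the corresponding root subgroups inside $U^-(\ck)$ by repeated application of the Chevalley commutator formula, so that the ``missing'' roots on each side are absorbed through commutators of roots common to both products. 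I expect this case analysis to be the main obstacle, as one has to keep track of which root subgroups carry several affine levels and ensure the reshuffle induces a birational equivalence onto a common dense subset of the respective images.

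With the second step in hand, (i)$\Rightarrow$(ii) of part~(b) follows by iterating along a chain of Knuth moves. For part~(a), Theorem~\ref{thm1} supplies a unique semistandard Young tableau $\T$ with $w_\gamma\sim_K w_\T$; writing $\underline{\mathbf c}$ for its shape, the invariance above gives $\overline{\pi_{\ud}(C_\gamma)}=\overline{\pi_{\underline{\mathbf c}}(C_{\gamma_\T})}$, and $\gamma_\T$ is a Lakshmibai--Seshadri gallery, so by \cite{GL2} its closure is an MV-cycle of weight $\mu_r^\vee$ inside $X_{\nu^\vee}$ with $\nu^\vee=\sum_j\omega^\vee_{c_j}$. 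The same reference shows that $\T\mapsto\overline{\pi_{\underline{\mathbf c}}(C_{\gamma_\T})}$ is a bijection onto the set of MV-cycles of weight $\mu_r^\vee$ in $X_{\nu^\vee}$; combined with uniqueness in Theorem~\ref{thm1}, this forces (ii)$\Rightarrow$(i) and completes the proof.
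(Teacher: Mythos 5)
Your overall architecture is the same as the paper's: describe $\pi(C_\gamma)$ as a product of root subgroups applied to $\mu_r^\vee$, show invariance of the closure under a single Knuth move by Chevalley commutator manipulations, and then invoke \cite{GL2} for semistandard Young tableaux to obtain both the MV-cycle statement and the implication {\it ii)}$\Rightarrow${\it i)}. (For the corollary itself the paper simply specializes Theorem~\ref{mainthm1} to galleries of type $(1,\ldots,1)$, which is what you are re-proving.) The first and third steps of your plan are sound.

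The gap is in the reduction of the second step to a purely local statement. A Knuth move permutes \emph{three} consecutive letters, so it changes three consecutive edges and two interior vertices of the polyline, and three factors $\bU^-_{\gamma,j},\bU^-_{\gamma,j+1},\bU^-_{\gamma,j+2}$ are affected, not two. More importantly, it does \emph{not} suffice to prove that the affected factors have the same orbit closure through the common vertex: in the full product these factors act on the entire tail $\bU^-_{\gamma,j+3}\cdots\bU^-_{\gamma,r-1}\cdot\mu_r^\vee$ rather than on the single point $\mu^\vee_{j+3}$, equality of closures of orbits through one point says nothing about the images of this larger set, and closure does not commute with left multiplication by the prefix factors. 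This is precisely why the paper carries out the commutations inside the full product and needs two ingredients absent from your sketch: the identity $\pi(C_\gamma)=\bU^-_{\gamma,0}\cdots\bU^-_{\gamma,r-1}\cdot\mu_r^\vee=U^-_{\mu_0^\vee}\cdots U^-_{\mu_{r-1}^\vee}\cdot\mu_r^\vee$ with the \emph{full} groups $U^-_{\mu_j^\vee}$ (Corollary~\ref{coro1image}, together with Proposition~\ref{wordgleichimage}), and the stability of the truncated image $Z_\gamma^{\ge k}$ under $U^-_{\mu_k^\vee}$ (Lemma~\ref{lemmauminus} and Proposition~\ref{proposition3}). These are what allow the new root subgroups produced by the commutator formula, such as $U_{(-\epsilon_y+\epsilon_m,\,p^{r}_{y,m})}$ and the superfluous factor $U_{(-\epsilon_y+\epsilon_z,\,p^{r}_{y,z})}$, to be absorbed into neighbouring factors or into the tail. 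Without establishing them, the ``reshuffle'' you describe cannot be completed, so the key lemma (the analogue of Lemmas~\ref{knuthinbuildung1} and~\ref{knuthinbuildung2}) remains unproved.
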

%So to study $G(\co)$--orbits on $G(\ck)/G(\co)$ for $G$ semisimple, without loss of generality
%we may assume 
The proof will be given in section~\ref{proffofmainthm}.

\section{The cell $C_\gamma$}
We recall a description of the cell and determine its dimension. 
For a gallery $\gamma=({\ui_1},{\ui_2},\ldots,{\ui_r})$ of type $\ud=(d_1,\ldots,d_r)$ set $\lam^\vee=\sum_{j=1}^r \omega_{d_j}^\vee$.
Let $\varphi(\gamma)=(\mu^\vee_0,\mu^\vee_1,\ldots, \mu^\vee_r)$ be the associated polyline
and denote by $\Sigma_\ud$ the associated Bott-Samelson variety of center $\gamma$.
The coweights $\mu^\vee_{j}$ and $\mu^\vee_{j+1}$ are joined
by the segment $[\mu^\vee_{j},\mu^\vee_{j+1}]$, $j=0,\ldots,r-1$. 
Let $H_{\alpha,n}$, $\alpha>0$, be a hyperplane such that $\mu^\vee_{j}\in H_{\alpha,n}$.
We say that $\gamma$ crosses the wall $H_{\alpha,n}$ in $\mu^\vee_{j}$ in 
the positive (negative) direction if  $[\mu^\vee_{j},\mu^\vee_{j+1}]\not\subset H^-_{\alpha,n}$
(respectively $[\mu^\vee_{j},\mu^\vee_{j+1}]\not\subset H^+_{\alpha,n}$).  Set
$$
\begin{array}{rcl}
\sharp^+(\gamma)&=&\sum_{j=0}^{r-1} (\sharp\text{\ positive wall crossings in $\mu^\vee_j$})=\sum_{j=0}^{r-1} \vert \Psi_{\gamma,j}\vert;\\
\sharp^-(\gamma)&=&\sum_{j=0}^{r-1} (\sharp\text{\ negative wall crossings in $\mu^\vee_j$});\\
\sharp^\pm(\gamma)&=&\sharp^+(\gamma)+\sharp^-(\gamma).
\end{array}
$$
The value of the sum $\sharp^\pm(\gamma)$ depends, by definition, only on the type of the gallery and not on the actual
choice of the gallery. It is easy to check that
$$
\sharp^\pm(\gamma)= (\lambda^\vee,2\rho^\vee),
$$
where $\rho^\vee$ is the sum of the fundamental coweights.

The following proposition is proved in \cite{GL2}, Proposition 4.19.
\begin{prop}\label{celldescrition}
The map 
$$
\begin{array}{rcl}
\bU^-_{\gamma,0}\times \bU^-_{\gamma,1}\times \bU^-_{\gamma,2}\times \ldots \times \bU^-_{\gamma,{r-1}} &\rightarrow &\Sigma_{\ud},\\
(u_0,\ldots, u_{r-1})&\mapsto& 
[u_0,\ldots,u_{r-1},1]
\end{array},
$$
is an isomorphism onto the cell $C_\gamma$. In particular, $\sharp^+(\gamma)$ is the dimension of the cell.
\end{prop}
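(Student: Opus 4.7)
The plan is to combine the Bia\l ynicki-Birula theorem with the iterated fibration structure from Lemma~\ref{lemma1}. Since $\Sigma_\ud$ is smooth projective and $\eta$ is generic anti-dominant, $C_\gamma$ is a locally closed affine subvariety of $\Sigma_\ud$ whose dimension equals the number of $\eta$-attracting weights in the tangent space at the fixed point indexed by $\gamma$. The tower of locally trivial fibrations $\phi_j$ from Lemma~\ref{lemma1} decomposes this tangent space as a direct sum of tangent spaces $\mathfrak{p}_{\mu^\vee_j}/\mathfrak{q}_{\mu^\vee_j}$ at the basepoints of the fibers $P_{\mu^\vee_j}/Q_{\mu^\vee_j}\cong SL_n(\bc)/P_{\omega^\vee_{d_j}}$, so both the cell and its dimension break up into contributions from the individual slots $j=0,\ldots,r-1$.

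Next I would identify the $\eta$-attracting weights in each summand. The space $\mathfrak{p}_{\mu^\vee_j}/\mathfrak{q}_{\mu^\vee_j}$ has a $T$-weight decomposition indexed by affine roots $(\alpha,n)\in\hat\Phi_j\setminus\hat\Phi_{j,j+1}$, with $\eta$ acting through the classical projection $\alpha$; the direction is attracting iff $\alpha\prec 0$. Writing $(\alpha,n)=(-\beta,-m)$ with $\beta\succ 0$, membership in $\hat\Phi_j\setminus\hat\Phi_{j,j+1}$ translates to $\mu^\vee_j\in H^-_{\beta,m}$ (closed) together with $\mu^\vee_{j+1}\notin H^-_{\beta,m}$. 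The key observation is that $(\beta,\mu^\vee_{j+1}-\mu^\vee_j)=(\beta,\epsilon^\vee_{\ui_{j+1}})\in\{-1,0,1\}$ for every classical root $\beta$, so no wall $H_{\beta,m}$ can strictly separate the endpoints of $[\mu^\vee_j,\mu^\vee_{j+1}]$; the two conditions therefore force $\mu^\vee_j\in H_{\beta,m}$. Hence the attracting weights in slot $j$ are in natural bijection with $\Psi_{\gamma,j}$, yielding $\dim C_\gamma=\sum_j|\Psi_{\gamma,j}|=\sharp^+(\gamma)$.

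With the dimension count in hand, I would construct the parameterization and verify that it is an isomorphism. Each factor $U_{(-\alpha,-n)}$ of $\bU^-_{\gamma,j}$ sits inside $U^-_{\mu^\vee_j}\subset P_{\mu^\vee_j}$; taking the product in a fixed order compatible with commutation in $U^-_{\mu^\vee_j}$, the scheme $\bU^-_{\gamma,j}$ is an affine space mapping isomorphically onto the attracting subspace of $\mathfrak{p}_{\mu^\vee_j}/\mathfrak{q}_{\mu^\vee_j}$. The map $(u_0,\ldots,u_{r-1})\mapsto[u_0,\ldots,u_{r-1},1]$ is then a morphism between affine varieties of matching dimension, and injectivity is checked recursively via the fibrations: the image under $\phi_{r-1}\circ\cdots\circ\phi_{j+1}$ depends only on $u_0,\ldots,u_{j-1}$, and the standard cell description of the fiber $P_{\mu^\vee_j}/Q_{\mu^\vee_j}$ recovers $u_j$ from its image there.

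The remaining hard step is to show that the image actually lies in $C_\gamma$, which amounts to controlling the multiplicative interaction between the slots. I would verify $\lim_{t\to 0}\eta(t)[u_0,\ldots,u_{r-1},1]=[1,\ldots,1]$ by noting that each $u_j$ is a product of root group elements with negative classical root, so $\eta(t)u_j\eta(t)^{-1}$ contracts to the identity as $t\to 0$, and the class indeed limits to the fixed point indexed by $\gamma$. Since source and target are smooth irreducible of the same dimension and the map is an injective morphism into $C_\gamma$, it is an isomorphism, completing the proof.
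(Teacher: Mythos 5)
First, a remark on the comparison itself: the paper does not prove this proposition at all --- it is quoted as Proposition~4.19 of \cite{GL2} --- so there is no in-text argument to measure your proof against. That said, your argument is the natural (and, structurally, the standard) one for statements of this kind: use the tower of $T$-equivariant locally trivial fibrations from Lemma~\ref{lemma1} to split the tangent space at the fixed point into the tangent spaces $\mathfrak{p}_{\mu^\vee_j}/\mathfrak{q}_{\mu^\vee_j}$ of the fibres, identify the $\eta$-attracting weights in slot $j$ with $\Psi_{\gamma,j}$, and match the cell with the product of the corresponding root subgroups. Your key combinatorial point --- that $(\beta,\epsilon^\vee_{\ui_{j+1}})\in\{-1,0,1\}$ prevents a wall $H_{\beta,m}$ from strictly separating $\mu^\vee_j$ from $\mu^\vee_{j+1}$, so that the two membership conditions force $\mu^\vee_j\in H_{\beta,m}$ rather than merely $\mu^\vee_j\in H^-_{\beta,m}$ --- is exactly what makes the attracting directions biject with $\Psi_{\gamma,j}$, and it is correct.

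The one inference I would not let stand as written is the last sentence. An injective morphism between smooth irreducible varieties of the same dimension landing in $C_\gamma$ is not automatically an isomorphism onto $C_\gamma$: by Zariski's main theorem it is an open immersion, but surjectivity needs an extra input. Either invoke Ax--Grothendieck (both source and target are affine spaces, $C_\gamma$ being one by Bia{\l}ynicki--Birula since the fixed points are isolated), or --- more in the spirit of your own recursion --- note that the attracting set of a $T$-equivariant Zariski-locally trivial tower is built up fibrewise from the attracting cells of the fibres, and that $\bU^-_{\gamma,j}$ surjects onto the full attracting cell of the base point of $P_{\mu^\vee_j}/Q_{\mu^\vee_j}$, which is a standard Schubert-cell statement for the Grassmannian $SL_n(\bc)/P_{\omega^\vee_{d_j}}$. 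Either patch closes the argument.
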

The image of the map 
$$
\begin{array}{rcl}
\chi:U^-_{\mu_0^\vee}\times U^-_{\mu_1^\vee}\times \ldots \times U^-_{\mu_{r-1}^\vee} &\rightarrow &\Sigma_{\ud},\\
(u_0,\ldots, u_{r-1})&\mapsto& 
[u_0,\ldots,u_{r-1},1]
\end{array},
$$
is obviously (by the choice of $\eta$) a subset of $C_\gamma$. But $\bU^-_{\gamma,j}$ is a subset of the group $U^-_{\mu_j^\vee}$, 
so we get as an immediate consequence:
\begin{coro}
The map $\chi$ has as image the cell $C_\gamma$.
\end{coro}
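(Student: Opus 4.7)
The plan is to verify the two inclusions separately; both are essentially on the page already. For the forward inclusion $\text{Image}(\chi)\subseteq C_\gamma$, the hypothesis that $\eta$ is generic and anti-dominant means that for every negative real root $(\beta,m)$ the conjugation $\eta(t)U_{(\beta,m)}\eta(t)^{-1}$ contracts to $\{1\}$ as $t\to 0$. Since each $U^-_{\mu_j^\vee}$ is by definition \eqref{PhiUj} generated by such root subgroups, the $\eta(t)$-limit of a class $[u_0,\ldots,u_{r-1},1]$ in the image of $\chi$ is $[1,\ldots,1]$, which under Lemma~\ref{lemma1} is the $T$-fixed point attached to the center gallery $\gamma$. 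By the Bia{\l}ynicki--Birula formula $C_\gamma=\{x\mid \lim_{t\to 0}\eta(t)x=\gamma\}$, this means the whole image of $\chi$ lies inside $C_\gamma$, as already remarked in the paragraph immediately preceding the corollary.

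For the reverse inclusion $C_\gamma\subseteq\text{Image}(\chi)$, I would simply invoke Proposition~\ref{celldescrition}, which asserts that the analogous map $(u_0,\ldots,u_{r-1})\mapsto[u_0,\ldots,u_{r-1},1]$ on the smaller domain $\bU^-_{\gamma,0}\times\cdots\times\bU^-_{\gamma,r-1}$ is already an isomorphism onto $C_\gamma$. Since $\bU^-_{\gamma,j}\subseteq U^-_{\mu_j^\vee}$ by construction, this restricted map factors through $\chi$ via the product of inclusions, and hence its image is contained in the image of $\chi$. Combined with the first inclusion this yields the claimed equality.

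There is essentially no obstacle: the corollary is a direct packaging of Proposition~\ref{celldescrition} together with the limit computation noted just above it. The only sanity check is that both maps use the same Bott--Samelson quotient formula $[u_0,\ldots,u_{r-1},1]$, which is immediate from the definition of $\Sigma_\ud$ as a fibered product. Note that, in contrast to the proposition, the map $\chi$ is no longer injective once we enlarge the domain from $\bU^-_{\gamma,j}$ to $U^-_{\mu_j^\vee}$; the corollary only claims surjectivity onto the cell.
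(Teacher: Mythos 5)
Your argument is correct and is exactly the one the paper intends: the containment $\mathrm{Image}(\chi)\subseteq C_\gamma$ because the anti-dominant $\eta$ contracts the groups $U^-_{\mu_j^\vee}$, and the reverse containment because $\bU^-_{\gamma,j}\subseteq U^-_{\mu_j^\vee}$ lets the isomorphism of Proposition~\ref{celldescrition} factor through $\chi$. The paper compresses this into the remark that the image is ``obviously (by the choice of $\eta$) a subset of $C_\gamma$'' followed by the inclusion of the $\bU^-_{\gamma,j}$, so there is nothing further to add.
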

The description of the cell in the proposition above has as an immediate consequence:
\begin{coro}\label{coro1image}
$\pi(C_\gamma)= \bU^-_{\gamma,0} \bU^-_{\gamma,1} \bU^-_{\gamma,2} \cdots  \bU^-_{\gamma,{r-1}}.\mu_r^\vee=
U^-_{\mu_0^\vee} U^-_{\mu_1^\vee} \cdots  U^-_{\mu_{r-1}^\vee}.\mu_r^\vee$.
\end{coro}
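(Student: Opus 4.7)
The plan is to apply the desingularization map $\pi$ directly to the two parametrizations of $C_\gamma$ already at hand. Recall that by definition $\pi$ sends a class $[p_0,\ldots,p_{r-1},p_r]\in\Sigma_\ud$ to $p_0\cdots p_{r-1}\mu_r^\vee\in\cg'$, so any class of the form $[u_0,\ldots,u_{r-1},1]$ is sent to $u_0u_1\cdots u_{r-1}\mu_r^\vee$. Both equalities will be immediate consequences of this formula combined with the parametrizations of $C_\gamma$ in Proposition~\ref{celldescrition} and the preceding corollary.

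For the first equality, Proposition~\ref{celldescrition} says that $(u_0,\ldots,u_{r-1})\mapsto [u_0,\ldots,u_{r-1},1]$ is an isomorphism from $\bU^-_{\gamma,0}\times\cdots\times\bU^-_{\gamma,r-1}$ onto $C_\gamma$. Composing this isomorphism with $\pi$ produces the multiplication map $(u_0,\ldots,u_{r-1})\mapsto u_0\cdots u_{r-1}\mu_r^\vee$, whose image is exactly $\bU^-_{\gamma,0}\cdots\bU^-_{\gamma,r-1}.\mu_r^\vee$. This gives $\pi(C_\gamma) = \bU^-_{\gamma,0}\cdots\bU^-_{\gamma,r-1}.\mu_r^\vee$.

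For the second equality, the same reasoning applies to the map $\chi:U^-_{\mu_0^\vee}\times\cdots\times U^-_{\mu_{r-1}^\vee}\to\Sigma_\ud$ defined by $(u_0,\ldots,u_{r-1})\mapsto[u_0,\ldots,u_{r-1},1]$. The immediately preceding corollary identifies the image of $\chi$ with $C_\gamma$, and applying $\pi$ turns this into $\pi(C_\gamma) = U^-_{\mu_0^\vee}\cdots U^-_{\mu_{r-1}^\vee}.\mu_r^\vee$. I do not anticipate any real obstacle: everything reduces to the one-line observation $\pi([u_0,\ldots,u_{r-1},1])=u_0\cdots u_{r-1}\mu_r^\vee$, and the present corollary is simply a transport through $\pi$ of structural facts already in place upstream.
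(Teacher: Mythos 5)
Your proposal is correct and matches the paper's own (implicit) argument: the paper presents this corollary as an immediate consequence of Proposition~\ref{celldescrition} and the preceding corollary on the surjectivity of $\chi$ onto $C_\gamma$, exactly via the observation $\pi([u_0,\ldots,u_{r-1},1])=u_0\cdots u_{r-1}\mu_r^\vee$. Nothing further is needed.
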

\begin{lem}\label{dimension}
The dimension of the cell $C_\gamma$ depends only on the coweight of the gallery. More precisely,
$\dim C_\gamma=\sharp^+(\gamma)=(\lam^\vee +\mu_r^\vee,\rho^\vee)$.
\end{lem}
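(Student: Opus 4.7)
The plan is to combine Proposition~\ref{celldescrition} with the signed/unsigned identity already recorded in the text. Since Proposition~\ref{celldescrition} gives $\dim C_\gamma = \sharp^+(\gamma)$, and the sum $\sharp^+(\gamma)+\sharp^-(\gamma) = \sharp^\pm(\gamma) = (\lambda^\vee, 2\rho^\vee)$ is established just before it, the problem reduces to computing the signed difference $\sharp^+(\gamma)-\sharp^-(\gamma)$ and reading off $\sharp^+(\gamma)$ from the two equations.

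For the main computation I would exploit that each edge of the polyline satisfies $\mu^\vee_{j+1}-\mu^\vee_j = \epsilon^\vee_{\ui_{j+1}}$, which is a Weyl conjugate of a fundamental coweight $\omega^\vee_{d_{j+1}}$. In type $A_{n-1}$, for any positive root $\alpha = \epsilon_i-\epsilon_k$ the pairing $(\alpha, \epsilon^\vee_{\ui_{j+1}})$ takes values only in $\{-1,0,+1\}$. Since $\mu^\vee_j$ is an integral coweight, for each positive $\alpha$ there is exactly one $n\in\Z$ with $\mu^\vee_j \in H_{\alpha,n}$, namely $n = -(\alpha,\mu^\vee_j)$. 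Unwinding the definition of $\Psi_{\gamma,j}$, the pair $(\alpha,n)$ contributes to $\sharp^+(\gamma)$ iff $(\alpha,\mu^\vee_{j+1}-\mu^\vee_j) = +1$, it contributes to $\sharp^-(\gamma)$ iff $(\alpha,\mu^\vee_{j+1}-\mu^\vee_j) = -1$, and the segment stays on the wall precisely when the pairing vanishes. Summing over $j$ and $\alpha$ and telescoping,
$$
\sharp^+(\gamma)-\sharp^-(\gamma) \;=\; \sum_{\alpha\succ 0}\sum_{j=0}^{r-1} (\alpha,\mu^\vee_{j+1}-\mu^\vee_j) \;=\; \Bigl(\sum_{\alpha\succ 0}\alpha,\; \mu^\vee_r\Bigr) \;=\; 2(\mu^\vee_r,\rho^\vee),
$$
using $\mu^\vee_0=0$ and the identity $\sum_{\alpha\succ 0}\alpha = 2\rho$, which coincides with $2\rho^\vee$ under the self-duality of the root system of $SL_n$.

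Combining this with $\sharp^+(\gamma)+\sharp^-(\gamma) = 2(\lambda^\vee,\rho^\vee)$ yields $\sharp^+(\gamma) = (\lambda^\vee+\mu^\vee_r,\rho^\vee)$, and in particular, for a fixed type $\ud$ (hence fixed $\lambda^\vee$), the dimension depends only on the endpoint $\mu^\vee_r$ of the associated polyline. There is no substantial obstacle: the only point that requires care is the uniform bound $|(\alpha,\mu^\vee_{j+1}-\mu^\vee_j)|\le 1$ for all positive roots, which is what converts the signed count of wall crossings into the total variation of the linear form $(\alpha,\cdot)$ along $\gamma$; this bound is specific to the $SL_n$ case and to segments corresponding to (Weyl conjugates of) fundamental coweights.
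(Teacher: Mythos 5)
Your argument is correct, but it is a genuinely different proof from the one in the paper. You obtain $\sharp^+(\gamma)$ in closed form by combining the recorded identity $\sharp^+(\gamma)+\sharp^-(\gamma)=(\lam^\vee,2\rho^\vee)$ with a telescoping evaluation of the signed count: since each step $\mu^\vee_{j+1}-\mu^\vee_j=\epsilon^\vee_{\ui_{j+1}}$ pairs with every positive root to a value in $\{-1,0,+1\}$, and since for each positive $\alpha$ there is exactly one $n$ with $\mu^\vee_j\in H_{\alpha,n}$, the signed crossing contribution of the pair $(j,\alpha)$ is exactly $(\alpha,\mu^\vee_{j+1}-\mu^\vee_j)$, whence $\sharp^+(\gamma)-\sharp^-(\gamma)=(2\rho,\mu^\vee_r)=2(\mu^\vee_r,\rho^\vee)$. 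The paper instead argues by induction on the height of $\lam^\vee-\mu^\vee_r$: the base case is the dominant gallery $(\omega^\vee_{d_1},\ldots,\omega^\vee_{d_r})$, where all crossings are positive and counted directly, and the inductive step modifies a single letter $\ui_t$ by a simple root, asserting that this changes $\sum_j\sharp\Psi_{\gamma,j}$ by exactly one. Your route has two advantages: it isolates the one place where the minuscule/$SL_n$ hypothesis enters (the bound $|(\alpha,\epsilon^\vee_{\ui})|\le 1$), and it avoids the unproved step of the paper's induction (the ``$\pm 1$'' claim, whose displayed verification in the paper moreover contains a sign slip between $\mu^\vee_r$ and ${\mu'}^\vee_r$); as a bonus, replacing the signed sum by the sum of absolute values and using Weyl invariance reproves the identity $\sharp^\pm(\gamma)=(\lam^\vee,2\rho^\vee)$ that you quote. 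What the paper's induction buys in exchange is a raising-operator-style structure that is the natural template for generalizing beyond type $A$, where the steps of a gallery need not be minuscule.
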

\begin{proof}
The proof is by induction on the height of $\lam^\vee-\mu_r^\vee$. If $\lam^\vee=\mu_r^\vee$, then
there exists only one gallery of this type with coweight $\lam^\vee$, it must be 
$\gamma=(\omega_{d_1}^\vee,\ldots,\omega_{d_r}^\vee)$. The hyperplanes crossed positively are
exactly all the affine hyperplanes lying between the origin and $\lam^\vee$, for each positve root
there are exactly $(\lam^\vee,\alpha^\vee)$ such hyperplanes and hence for this gallery we get
$$
\sharp^+(\gamma)=\sum_{\alpha\in\Phi^+}(\lam^\vee,\alpha^\vee)=(\lam^\vee,2\rho^\vee)=(2\lam^\vee,\rho^\vee),
$$
proving the claim in this case. Suppose now the claim holds if the height of $\lam^\vee-$ coweight of the gallery is strictly
smaller than $s$. Suppose $\gamma=(\ui_1,\ldots,\ui_r)$ is a gallery such that the height of $\lam^\vee-\mu^\vee_r$ is equal to $s$ and $s>0$.
Then there exists a simple root $\alpha$ and $1\le t\le r$ such that $\epsilon^\vee_{\ui_t}\not=\omega^\vee_{d_t}$
and  $s_\alpha(\epsilon^\vee_{\ui_t})=\epsilon^\vee_{\ui_t}+\alpha^\vee$. Let ${\ui'}_t$ be such that $\epsilon_{{\ui'}_t}=\epsilon^\vee_{\ui_t}+\alpha^\vee$
and let $\gamma'$ be the gallery
$$
\gamma'=({\ui_1},\ldots,{\ui_{t-1}},{\ui'}_t,{\ui_{t+1}},\ldots,{\ui_r}),
$$
and denote by ${\mu'}^\vee_r$ the coweight of $\gamma'$.
Then the height of $\lam^\vee-{\mu'}^\vee_r$ is equal to $s-1$ and hence 
$\sharp^+(\gamma')=(\lam^\vee+{\mu'}^\vee_r,\rho^\vee)$. It follows:
$$
\begin{array}{rcl}
\sharp^+(\gamma)=\sum_{j=1}^r  \sharp \Psi_{\gamma,j}
=(\sum_{j=1}^r  \sharp \Psi_{\gamma',j})+1
&=&(\lam^\vee+{\mu'}^\vee_r,\rho^\vee)+1\\
&=&(\lam^\vee+{\mu'}^\vee_r,\rho^\vee)+(\alpha^\vee,\rho^\vee)\\
&=&(\lam^\vee+{\mu}^\vee_r,\rho^\vee)
\end{array}
$$
\end{proof}
%%%%%%%%%%%%%%%%%%%%%%%%%%%%%%%%%%%%%%%%%%%%%
\section{Tail of a gallery}
Let $\gamma$ be a gallery of type $\ud$.
Let $\T_\gamma$ be the corresponding key tableau.
In Section~\ref{seWandK}, we have associated a to a key tableau a word in the alphabet $\bA$,
we just write $w_\gamma$ for the word associated to $\T_\gamma$. 

Corresponding to the type, we have the associated Bott-Samelson variety $\Sigma_\ud$, the dominant coweight $\lam^\vee=\sum \omega_{d_j}^\vee$ and the desingularization map:
$$
\pi_1:\Sigma_\ud\rightarrow X_{\lam^\vee}.
$$
And, associated to the gallery, we have the cell $C_\gamma\subset \Sigma_\ud$. Recall that the map $\gamma\rightarrow w_\gamma$ which associates to a gallery $\gamma$
the word $w_\gamma$ in the alphabet $\bA$ is far from being injective. Let us first show the following.
\begin{prop}\label{wordgleichimage}
The image $Z_\gamma = \pi_1(C_\gamma)$ depends only on the word $w_\gamma$.
\end{prop}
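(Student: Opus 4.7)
The plan is to reduce the statement to a single ``column split''. Two galleries have the same associated word if and only if they correspond to two different partitions of that word into strictly increasing columns; the partition into singleton columns is distinguished and yields a unique gallery $\tilde\gamma$ of type $(1,\ldots,1)$ with $w_{\tilde\gamma}=w_\gamma$. Since $\gamma$ is obtained from $\tilde\gamma$ by successively merging consecutive columns whose concatenation is strictly increasing, it is enough to show that the reverse elementary move—splitting a single column $\ui_k=(i_1<\cdots<i_d)$ into two consecutive columns $\ui_k^{(1)}=(i_1,\ldots,i_p)$ and $\ui_k^{(2)}=(i_{p+1},\ldots,i_d)$—does not change the image $Z_\gamma$.

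Let $\varphi(\gamma)=(\mu_0^\vee,\ldots,\mu_r^\vee)$. The polyline of the split gallery $\gamma'$ inserts the new vertex $\nu^\vee:=\mu_{k-1}^\vee+\epsilon^\vee_{\ui_k^{(1)}}$ between $\mu_{k-1}^\vee$ and $\mu_k^\vee$, the endpoint $\mu_r^\vee$ being unchanged. Applying Corollary~\ref{coro1image} to both galleries, the equality $Z_\gamma=Z_{\gamma'}$ reduces to the set equality
\[
U^-_{\mu_{k-1}^\vee}\cdot U^-_{\nu^\vee}\cdot U^-_{\mu_k^\vee}\;=\;U^-_{\mu_{k-1}^\vee}\cdot U^-_{\mu_k^\vee}
\]
inside $G(\ck)$. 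The inclusion ``$\supseteq$'' is trivial, and the reverse one follows once we establish $U^-_{\nu^\vee}\subseteq U^-_{\mu_{k-1}^\vee}\cdot U^-_{\mu_k^\vee}$.

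The key combinatorial input is the inequality
\[
(\alpha,\nu^\vee)\;\geq\;\min\bigl((\alpha,\mu_{k-1}^\vee),(\alpha,\mu_k^\vee)\bigr)
\qquad\text{for every positive root }\alpha.
\]
Writing $\alpha=\epsilon_i-\epsilon_j$ with $i<j$, the inequality can fail only if $(\alpha,\epsilon^\vee_{\ui_k^{(1)}})<0$ and $(\alpha,\epsilon^\vee_{\ui_k^{(2)}})>0$, which forces $j\in\ui_k^{(1)}$ and $i\in\ui_k^{(2)}$; this is ruled out by the hypothesis that every entry of $\ui_k^{(1)}$ is strictly smaller than every entry of $\ui_k^{(2)}$. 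From this inequality, any generator $U_{(-\alpha,m)}$ of $U^-_{\nu^\vee}$ belongs to $U^-_{\mu_{k-1}^\vee}$ when $(\alpha,\epsilon^\vee_{\ui_k})\geq 0$ and to $U^-_{\mu_k^\vee}$ when $(\alpha,\epsilon^\vee_{\ui_k})<0$.

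The main obstacle will be to lift this root-wise containment to the desired product decomposition $U^-_{\nu^\vee}\subseteq U^-_{\mu_{k-1}^\vee}\cdot U^-_{\mu_k^\vee}$ at the group level. The partition of the roots occurring in $U^-_{\nu^\vee}$ is linearly separated by the functional $\epsilon^\vee_{\ui_k}$, which makes each half closed under the relevant root-sum operation within $U^-_{\nu^\vee}$. One then picks a linear order on the negative real affine roots compatible with this separation and invokes the standard factorisation of the pro-unipotent subgroup $U^-_{\nu^\vee}$ as an ordered product of its affine root subgroups, using the Chevalley commutation relations to rewrite every element of $U^-_{\nu^\vee}$ as an element of $U^-_{\mu_{k-1}^\vee}$ times an element of $U^-_{\mu_k^\vee}$.
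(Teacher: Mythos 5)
Your proposal is correct in substance but takes a genuinely different route from the paper's. You reduce everything to one elementary column split and prove the group-level absorption $U^-_{\nu^\vee}\subseteq U^-_{\mu_{k-1}^\vee}\cdot U^-_{\mu_k^\vee}$; the combinatorial core --- that $(\alpha,\nu^\vee)\geq\min\bigl((\alpha,\mu_{k-1}^\vee),(\alpha,\mu_k^\vee)\bigr)$ because a failure would force an entry of the first part of the column to exceed an entry of the second part --- checks out, as does the resulting case division by the sign of $(\alpha,\epsilon^\vee_{\ui_k})$. The paper avoids any such factorisation statement: it compares $\gamma$ directly with the fully refined gallery $w_\gamma$ of type $(1,\ldots,1)$ and exploits the fact that Corollary~\ref{coro1image} describes the image twice, once as $\bU^-_{\gamma,0}\cdots\bU^-_{\gamma,r-1}.\mu_r^\vee$ and once as $U^-_{\mu_0^\vee}\cdots U^-_{\mu_{r-1}^\vee}.\mu_r^\vee$; an explicit listing of the relevant affine root subgroups gives the sandwich $\bU^-_{\gamma,k}\subseteq\bU^-_{w_\gamma,(k,0)}\cdots\bU^-_{w_\gamma,(k,d_k-1)}\subseteq U^-_{\mu_k^\vee}$ and hence $Z_\gamma\subseteq Z_{w_\gamma}\subseteq Z_\gamma$, with no commutator manipulations at all. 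What your approach buys is that it only uses the coarse description by the vertex groups $U^-_{\mu_j^\vee}$; what it costs is the final step, which you only sketch: the ordered-product factorisation $U^-_{\nu^\vee}=U_A\cdot U_B$ for the partition of its infinitely many affine roots. The two halves are indeed closed under root addition and land in $U^-_{\mu_{k-1}^\vee}$ and $U^-_{\mu_k^\vee}$ respectively, and for $SL_n$ the factorisation can be checked directly on lower-triangular matrices with valuation constraints on the entries, so the gap is fillable --- but as written it is an appeal to a standard fact in a pro-unipotent setting rather than an argument, and this is exactly the work the paper's double description is designed to sidestep. (A cosmetic point: when the split column is the last one there is no factor $U^-_{\mu_r^\vee}$ in either product, and you should instead invoke that $U^-_{\mu_r^\vee}$ stabilises the endpoint $\mu_r^\vee$; your containment still suffices.)
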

The proof will be given at the end of this section.
Set $Z_\gamma=\pi_1(C_\gamma)$ and note that 
$pr(U^-_{\mu_0^\vee})\subseteq U^-({\mathcal O})=U^-({\mathcal K})\cap SL_n({\mathcal O})$
by definition. 
Recall that $\Sigma_\ud$ inherits by \eqref{BottSamelson2} naturally an $SL_n({\mathcal O})$-action, and the map $\pi_1$ is equivariant with respect to this action.
The cell $C_\gamma$ is not $SL_n({\mathcal O})$-stable, but the cells
are stable under the action of $U^-({\mathcal O})$: recall that
$$
C_\gamma=\{z\in \Sigma_\ud\mid \lim_{t\rightarrow 0}\eta(t).z=\gamma\}
$$
for a fixed anti-dominant one-parameter subgroup $\eta:\bc^*\rightarrow T$. Since
$$
\lim_{t\rightarrow 0}\eta(t) u \eta(t)^{-1}=1,
$$
it follows that $u.z\in C_\gamma$ for all $z\in C_\gamma$ and all $u\in U^-({\mathcal O})$.
By the definition of $\Psi_0$ we have as an immediate consequence:
\begin{lem}\label{lemmauminus}
$Z_\gamma$ is $U^-_{\mu_0^\vee}$-stable.
\end{lem}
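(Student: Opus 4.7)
The plan is to combine three ingredients already present in the discussion preceding the statement: (i) the $SL_n(\mathcal{O})$-equivariance of $\pi_1$, (ii) the stability of the Bia\l ynicki-Birula cell $C_\gamma$ under $U^-(\mathcal{O})$, and (iii) the inclusion $pr(U^-_{\mu_0^\vee})\subseteq U^-(\mathcal{O})$. These combine in a straightforward way.

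First, I would recall the fibered product description \eqref{BottSamelson2}, $\Sigma_\ud=\Pc_{\mu_0^\vee}\times_{\Qc_{\mu_0^\vee}}\Pc_{\mu_1^\vee}\times\cdots\times_{\Qc_{\mu_{r-1}^\vee}}\Pc_{\mu_r^\vee}/\Pc_{\mu_r^\vee}$. The $SL_n(\mathcal{O})$-action (and in particular the $U^-(\mathcal{O})$-action) on $\Sigma_\ud$ comes from left multiplication on the first factor $\Pc_{\mu_0^\vee}$, which contains $SL_n(\mathcal{O})$. The map $\pi_1:[p_0,\ldots,p_r]\mapsto p_0\cdots p_{r-1}\mu_r^\vee$ is manifestly equivariant for this action on $\Sigma_\ud$ and the usual $SL_n(\mathcal{O})$-action on $\cg'$.

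Next, I would verify that $C_\gamma$ is stable under $U^-(\mathcal{O})$. Let $u\in U^-(\mathcal{O})$ and $z\in C_\gamma$, so $\lim_{t\to 0}\eta(t).z=\gamma$. Writing
\[
\eta(t).(u.z)=\bigl(\eta(t)u\eta(t)^{-1}\bigr).\bigl(\eta(t).z\bigr),
\]
the fact that $\eta$ is anti-dominant gives $\lim_{t\to 0}\eta(t)u\eta(t)^{-1}=1$, and hence $\lim_{t\to 0}\eta(t).(u.z)=1.\gamma=\gamma$, which means $u.z\in C_\gamma$. Applying $\pi_1$ and using equivariance, $u.Z_\gamma\subseteq Z_\gamma$, so $Z_\gamma$ is $U^-(\mathcal{O})$-stable.

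Finally, the action of $\hat\cl(G)$ on $\cg'$ factors through the projection $pr:\hat\cl(G)\to\cl(G(\ck))$, and as noted just before the lemma one has $pr(U^-_{\mu_0^\vee})\subseteq U^-(\mathcal{O})$. Combining this inclusion with the previous step shows that every element of $U^-_{\mu_0^\vee}$ acts on $Z_\gamma$ as an element of $U^-(\mathcal{O})$, and hence preserves $Z_\gamma$. There is no real obstacle here; the only thing to be a bit careful about is distinguishing the action of the Kac--Moody group on $\Sigma_\ud$ from the induced action of its image under $pr$ on $\cg'$, so that the equivariance of $\pi_1$ is applied correctly.
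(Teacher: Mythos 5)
Your proof is correct and follows exactly the paper's own reasoning: the paper derives the lemma as an immediate consequence of the same three facts you list, namely the $U^-(\mathcal{O})$-stability of $C_\gamma$ via $\lim_{t\to 0}\eta(t)u\eta(t)^{-1}=1$, the equivariance of $\pi_1$ for the $SL_n(\mathcal{O})$-action coming from \eqref{BottSamelson2}, and the inclusion $pr(U^-_{\mu_0^\vee})\subseteq U^-(\mathcal{O})$. No differences worth noting.
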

We want to reformulate  Lemma~\ref{lemmauminus} to make it available in a more general situation.
Let $\gamma=({\ui_1},{\ui_2},\ldots,{\ui_r})$ be a gallery of type $\ud$, denote by $\varphi(\gamma)=(\mu^\vee_0,\ldots,\mu^\vee_r)$
the associated polyline.
For $k\ge 0$ consider the tail $\gamma^{\ge k}=(\ui_k,\ldots,\ui_r)$ of the gallery $\gamma$, denote by
$\varphi(\gamma^{\ge k})=({\mu'}_0^\vee,\ldots,{\mu'}_{r-k}^\vee)$ the associated polyline and by
$\varphi^{shift}(\gamma^{\ge k})=(\mu_k^\vee,\ldots,\mu_r^\vee)$ the shifted polyline (starting in $\mu_k^\vee$ instead of $0$) 
of the tail gallery. Denote by $Z_{\gamma^{\ge k}}$ the image of the cell $C_{\gamma^{\ge k}}\subset \Sigma_{(d_k,\ldots,d_r)}$:
$$
Z_{\gamma^{\ge k}}=\bU^-_{\gamma^{\ge k},0}\cdots \bU^-_{\gamma^{\ge k},r-k-1}.{\mu'}_{r-k}^\vee 
=U^-_{{\mu'}_0^\vee}U^-_{{\mu'}_1^\vee}\cdots U^-_{{\mu'}_{r-k-1}^\vee}.{\mu'}_{r-k}^\vee
$$
and let $Z_\gamma^{\ge k}$ be the {\it truncated image}: 
$$
Z_\gamma^{\ge k}=\bU^-_{\gamma,k}\bU^-_{\gamma,k+1}\cdots \bU^-_{\gamma,r-1}.\mu_r^\vee.
$$
\begin{prop}\label{proposition3}
$Z_\gamma^{\ge k}$ is $U^-_{\mu_k^\vee}$-stable.
\end{prop}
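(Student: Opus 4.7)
The plan is to realize $Z_\gamma^{\ge k}$ as the image of a Bia{\l}ynicki-Birula cell in a Bott-Samelson variety whose first factor is $P_{\mu_k^\vee}$, and then imitate the argument of Lemma~\ref{lemmauminus}.

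First I would introduce the Bott-Samelson variety built with center the shifted polyline $(\mu_k^\vee, \mu_{k+1}^\vee, \ldots, \mu_r^\vee)$, namely
$$\Sigma^{(k)} = P_{\mu_k^\vee} \times_{Q_{\mu_k^\vee}} P_{\mu_{k+1}^\vee} \times_{Q_{\mu_{k+1}^\vee}} \cdots \times_{Q_{\mu_{r-1}^\vee}} P_{\mu_r^\vee}/P_{\mu_r^\vee},$$
together with the analogue of the morphism from Section~\ref{mvgallery}, namely $\pi^{(k)}: \Sigma^{(k)} \to \cg'$, $[p_0,\ldots,p_{r-k}] \mapsto p_0 \cdots p_{r-k-1} \cdot \mu_r^\vee$ (well-defined by the inclusions $Q_{\mu_{k+j-1}^\vee} \subset P_{\mu_{k+j}^\vee}$). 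The variety $\Sigma^{(k)}$ inherits a $P_{\mu_k^\vee}$-action by left multiplication on the first factor, and by construction $\pi^{(k)}$ is equivariant for this action; in particular, the subgroup $U^-_{\mu_k^\vee} \subset P_{\mu_k^\vee}$ acts.

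Next I would identify $Z_\gamma^{\ge k}$ with $\pi^{(k)}(C^{(k)}_{y_0})$, where $C^{(k)}_{y_0}$ is the Bia{\l}ynicki-Birula cell at the $T$-fixed point $y_0 = [1,\ldots,1] \in \Sigma^{(k)}$. The analogues of Lemma~\ref{lemma1} and Proposition~\ref{celldescrition} apply verbatim to $\Sigma^{(k)}$: their proofs use only the fibred-product structure together with the parabolics attached to the vertices of the polyline, not the fact that the starting vertex is $0$. Since the sets $\Psi$ defined at $\mu_{k+j}^\vee$ along the shifted polyline coincide with $\Psi_{\gamma, k+j}$, we recover
$$\pi^{(k)}(C^{(k)}_{y_0}) = \bU^-_{\gamma,k} \bU^-_{\gamma,k+1} \cdots \bU^-_{\gamma,r-1} \cdot \mu_r^\vee = Z_\gamma^{\ge k}.$$

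Finally, the $U^-_{\mu_k^\vee}$-stability of $C^{(k)}_{y_0}$ proceeds exactly as in Lemma~\ref{lemmauminus}. Since $U^-_{\mu_k^\vee}$ is generated by root subgroups $U_{(\beta,m)}$ with $\beta \prec 0$, conjugation by the generic anti-dominant $\eta(t)$ scales $e_\beta(s t^m)$ by $t^{(\beta,\eta_0)}$ with $(\beta, \eta_0) > 0$, so $\lim_{t \to 0} \eta(t) u \eta(t)^{-1} = 1$ for every $u \in U^-_{\mu_k^\vee}$. For $x \in C^{(k)}_{y_0}$ this gives
$$\lim_{t \to 0} \eta(t)(ux) = \bigl(\lim_{t \to 0} \eta(t) u \eta(t)^{-1}\bigr)\bigl(\lim_{t \to 0} \eta(t) x\bigr) = y_0,$$
so $u x \in C^{(k)}_{y_0}$, and the equivariance of $\pi^{(k)}$ yields $U^-_{\mu_k^\vee} \cdot Z_\gamma^{\ge k} \subseteq Z_\gamma^{\ge k}$. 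The only nontrivial point is the shifted version of Proposition~\ref{celldescrition}; but since the proof in \cite{GL2} manipulates only the sequence of parabolics along the polyline, the adaptation is routine.
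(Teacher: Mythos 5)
Your proof is correct, but it takes a different route from the paper's. You construct the shifted Bott--Samelson variety $\Sigma^{(k)}$ directly, with first factor $P_{\mu_k^\vee}$, and then rerun the Bia{\l}ynicki-Birula cell description and the limit argument $\lim_{t\to 0}\eta(t)u\eta(t)^{-1}=1$ at the fixed point $[1,\ldots,1]$. The paper instead never leaves the origin: it applies Lemma~\ref{lemmauminus} to the tail gallery $\gamma^{\ge k}$ (whose polyline starts at $0$, so the already-established statements apply literally), and then conjugates by the translation $t_{\mu_k^\vee}$ of the extended affine Weyl group, using the shift formula $t_{\mu_k^\vee}U_\beta(at^\ell)t_{\mu_k^\vee}^{-1}=U_\beta(at^{\ell+(\mu_k^\vee,\beta)})$ to obtain $t_{\mu_k^\vee}\cdot Z_{\gamma^{\ge k}}=Z_\gamma^{\ge k}$ and $t_{\mu_k^\vee}U^-_{\mu_0^\vee}t_{\mu_k^\vee}^{-1}=U^-_{\mu_k^\vee}$, whence the stability is transported. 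The two arguments are essentially equivalent -- indeed $t_{\mu_k^\vee}$ induces the isomorphism between the paper's $\Sigma_{(d_{k+1},\ldots,d_r)}$ with center $\gamma^{\ge k}$ and your $\Sigma^{(k)}$ -- but the paper's conjugation trick buys you the shifted versions of Lemma~\ref{lemma1} and Proposition~\ref{celldescrition} for free, whereas your approach must re-verify them at a base vertex $\mu_k^\vee\neq 0$ (the step you flag as ``routine''; it is, since the proof in \cite{GL2} only uses the chain of parabolics along the polyline, but the translation argument is the cleaner way to make that precise). Your direct computation that $\eta$-conjugation contracts all of $U^-_{\mu_k^\vee}$ (not just $U^-(\co)$) is also fine, since $(\beta,\eta)>0$ for $\beta\prec 0$ independently of the loop degree $m$.
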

\begin{proof}
Let $T'\subset PSL_n(\bc)$ be as in section~\ref{affgrass}.
Corresponding to a coweight $\mu^\vee_k$ let $t_{\mu_k^\vee}$ be the translation in the extended affine Weyl group. 
The translations act on the affine roots by shifts: 
$t_{{\mu_k^\vee}}:(\beta,\ell)\mapsto (\beta, \ell+ ( {\mu_k^\vee},\beta))$
and $t_{-{\mu_k^\vee}}:(\beta,k)\mapsto (\beta, \ell- ( {\mu_k^\vee},\beta))$.

Denote by $U_{\beta}:\bc((t))\rightarrow G({\mathcal K})$ the root subgroup corresponding to the root $\beta$.
Note that the conjugation by $t_{{\mu_k^\vee}}$ acts on the root subgroups by a shift: given $a\in\bc$ and $\ell\in\Z$, then 
$$
t_{\mu^\vee_k} U_\beta(at^{\ell})(t_{\mu^\vee_k})^{-1}= U_\beta(at^{\ell+( \mu_k,\beta)}),
$$
so the conjugation on the group and the translation on the coweight lattice correspond to each other.
It follows:
\begin{equation}\label{movecycle}
\begin{array}{rcl}
 t_{{\mu_k^\vee}} .Z_{\gamma^{\ge k}}&=&t_{{\mu_k^\vee}} .(\bU^-_{\gamma^{\ge k},0}\cdots \bU^-_{\gamma^{\ge k},r-k-1}.{\mu'}_{r-k}^\vee) \\
& =&\bU^-_{\gamma,k}\bU^-_{\gamma,k+1}\cdots \bU^-_{\gamma,r-1}.\mu_r^\vee\\
% \mu^\vee_k .Z_{\gamma^{\ge k}}&=&\mu^\vee_k .(U_{{\mu'}_0^\vee}U_{{\mu'}_1^\vee}\cdots U_{{\mu'}_{r-k-1}^\vee}\cdot {\mu'}_{r-k}^\vee)\\
%&=&(\mu^\vee_k U_{{\mu'}_0^\vee}(\mu^\vee_k)^{-1})\cdot (\mu^\vee_k. U_{{\mu'}_1^\vee}(\mu^\vee_k)^{-1})\cdots 
%(\mu^\vee_k.U_{{\mu'}_{r-k-1}^\vee}(\mu^\vee_k)^{-1})\cdot t_{\mu^\vee_k}.{\mu'}_{r-k}^\vee \\
%&=&U_{\mu^\vee_k}U_{\mu^\vee_{k+1}}\cdots U_{\mu^\vee_{r-1}}.\mu_r^\vee\\
&=&Z_{\gamma}^{\ge k}.
\end{array}
\end{equation}
By Lemma~\ref{lemmauminus}, $Z_{\gamma^{\ge k}}$ is stabilized by $\bU^-_{\gamma,0}$,
so $ t_{{\mu_k^\vee}} \bU^-_{\gamma,0}  (t_{{\mu_k^\vee}})^{-1}= \bU^-_{\gamma,k} $ stabilzes $Z_{\gamma}^{\ge k}$. 
\end{proof}
\vskip 5pt
\noindent{\it Proof of Proposition~\ref{wordgleichimage}}.
Let $w_\gamma$ be the word (in the alphabet $\bA$) associated to $\gamma$. Since $\bA\subset\bW$,
we can view $w_\gamma$ also as a gallery of type ${\mathbf 1_N}=(1,1,\ldots,1)$,%\underbrace{(1,1,\ldots,1)}_N$, 
where $N=\sum_{j=1}^r d_j$ is the length of the word. For $\gamma=(\ui_1,\ui_2,\ldots,\ui_r)$ let $\ui_1=(j_1,\ldots,j_{d_1})$,
$\ui_2=(h_1,\ldots.h_{d_2})$ etc. and set $\eta^\vee_{0,0}=0$ and
$$
\eta^\vee_{0,1}=\epsilon_{j_1}^\vee,\eta^\vee_{0,2}=\eta^\vee_{0,1}+\epsilon_{j_2}^\vee ,\ldots,
\eta^\vee_{0,d_1-1}=\eta^\vee_{0,d_1-2}+\epsilon_{j_{d_1-1}}^\vee,\eta^\vee_{1,0}=\epsilon_{\ui_1},\eta^\vee_{1,1}=\epsilon_{\ui_1}+
\epsilon_{h_{1}}^\vee,\ldots.
$$
Let
$$
(\eta^\vee_{0,0},\eta^\vee_{0,1},\eta^\vee_{0,2},\ldots,\eta^\vee_{0,d_1-1},\eta^\vee_{1,0},\eta^\vee_{1,1},\ldots,\eta^\vee_{1,d_2-1},
\ldots,\eta^\vee_{r-1,d_r-1}\eta^\vee_{r,0})
$$
be the polyline associated to $w_\gamma$ (viewed as a gallery of type ${\mathbf 1_N}$). The indexing is such that $\eta^\vee_{k,0}=\mu^\vee_{k}$. 
We use now Corollary~\ref{coro1image}
and compare the product of unipotent groups $\bU_{\gamma,k}^-$ and the unipotent group $U_{\mu^\vee_k}^-$ 
associated to the gallery $\gamma$ 
%and its polyline $(\mu^\vee_0,\mu^\vee_1,\ldots,\mu^\vee_r)$
and the product of unipotent groups $\bU^-_{w_\gamma,(k,0)}\cdots \bU^-_{w_\gamma,(k,d_k-1)}$
%_{\eta^\vee_{k,1}}$\cdots U^-_{\eta^\vee_{k,d_k}}$ 
associated to the gallery $w_\gamma$.% and its 
%polyline $(\eta^\vee_{0,0},\eta^\vee_{0,1},\ldots,\eta^\vee_{r,0})$.

For a positive root $\epsilon_{\ell}-\epsilon_{\ell'}$, $1\le \ell<\ell'\le n$, denote by 
$p^k_{\ell,\ell'}$ the integer such that $\mu_k^\vee\in H_{\epsilon_{\ell}-\epsilon_{\ell'}, -p^k_{\ell,\ell'}}$.
Let $\ui_{k+1}=(j_1,\ldots,j_{d_k})$, then we have for $k=0,\ldots,r-1$:
%$$
%U_{\mu^\vee_k}=\prod_{\substack{\ell=j_1,\ldots,j_{d_k} \\ \ell< m\le n \\ m\not=j_1,\ldots,j_{d_k}}} U_{-\epsilon_\ell+\epsilon_m, p_{\ell,m}}
%$$
$$
\bU^-_{\gamma,k}=\prod_{\substack{ j_1< m\le n \\ m\not=j_2,\ldots,j_{d_k}}} U_{-\epsilon_{j_1}+\epsilon_m, p^k_{j_1,m}}
\prod_{\substack{ j_2< m\le n \\ m\not=j_3,\ldots,j_{d_k}}} U_{-\epsilon_{j_2}+\epsilon_m, p^k_{j_2,m}}\cdots 
\prod_{{ j_{d_k}< m\le n}} U_{-\epsilon_{j_{d_k}}+\epsilon_m, p^k_{j_{d_k},m}}.
$$
Note that  $\mu_k^\vee\in H_{\epsilon_{j_t}-\epsilon_m, -p^k_{\ell,m}}$ if and only if $\mu_k^\vee+\epsilon_{j_1}^\vee+\ldots+\epsilon_{j_{t-1}}^\vee\in H_{\epsilon_{j_t}-\epsilon_m, -p^k_{\ell,m}}$,
and hence
$$
\begin{array}{rl}
\bU^-_{w_\gamma,(k,0)}&\cdots \bU^-_{w_\gamma,(k,d_k-1)}\\
%U^-_{\eta^\vee_{k,d_k}}&U^-_{\eta^\vee_{k+1,1}}\cdots U^-_{\eta^\vee_{k+1,d_{k+1}-1}}\\
=&\prod_{j_1<m\le n }U_{-\epsilon_{j_1}+\epsilon_m, p^k_{j_1,m}}\prod_{j_2<m\le n}U_{-\epsilon_{j_2}+\epsilon_m, p^k_{j_2,m}}\cdots
\prod_{j_{d_k}<m\le n}U_{-\epsilon_{j_{d_k}}+\epsilon_m, p^k_{j_{d_k},m}}.
\end{array}
$$
%$$
%\begin{array}{rl}
%\bU^-_{w_\gamma,(k,0)}\cdots \bU^-_{w_\gamma,(k,d_k-1)}\\
%U^-_{\eta^\vee_{k,d_k}}&U^-_{\eta^\vee_{k+1,1}}\cdots U^-_{\eta^\vee_{k+1,d_{k+1}-1}}\\
%&=\prod_{j_1<m\le n }U_{-\epsilon_{j_1}+\epsilon_m, p^k_{j_1,m}}\prod_{j_2<m\le n}U_{-\epsilon_{j_2}+\epsilon_m, p^k_{j_2,m}}\cdots
%\prod_{j_{d_k}<m\le n}U_{-\epsilon_{j_{d_k}}+\epsilon_m, p^k_{j_{d_k},m}}.
%\end{array}
%$$
It follows that $\bU^-_{\gamma,k}\subseteq \bU^-_{w_\gamma,(k,0)}\cdots \bU^-_{w_\gamma,(k,d_k-1)}\subseteq U_{\mu_k}^-$ and hence by Corollary~\ref{coro1image}:
$$
\begin{array}{rcl}
Z_\gamma=\bU^-_{\gamma,0} \bU^-_{\gamma,1} \cdots  \bU^-_{\gamma,{r-1}}.\mu_r^\vee
&\subseteq& 
Z_{w_\gamma}=\bU^-_{w_\gamma,(0,0)}\bU^-_{w_\gamma,(0,1)}\cdots \bU^-_{w_\gamma,(r-1,d_{r}-1)}.\mu_r^\vee \\
&\subseteq&
Z_\gamma=U^-_{\mu^\vee_0} U^-_{\mu^\vee_1} \cdots  U^-_{\mu^\vee_{r-1}}.\mu_r^\vee
\end{array}
$$
This shows that $Z_\gamma$ depends only on the word $w_\gamma$, which proves the proposition. 
\qed
\section{Proof of Theorem~\ref{mainthm1}}\label{proffofmainthm}

We have seen that the image $\pi(C_\gamma)$ of a cell depends only on the associated word, to prove part {\it b)} of
Theorem~\ref{mainthm1} it remains to show that the closure of the image depends only on the equivalence 
class of the associated word modulo the Knuth relation. 
We discuss first the simplest case at length. The proof in the general case is very similar, we leave the details to the reader.
As in the section before, we view a word as a gallery of type $(1,1,\ldots,1)$.
\begin{lem}\label{knuthinbuildung0}
If $w_1=yxz$ and $w_2=yzx$, $x\le y<z$, then 
$$
\pi(C_{w_1})=\bU^-_{w_1,0}\bU^-_{w_1,1}\bU^-_{w_1,2}\cdot(\epsilon_x+\epsilon_y+\epsilon_z)^\vee
\text{\ and\ }
\pi(C_{w_2})=\bU^-_{w_2,0}\bU^-_{w_2,1}\bU^-_{w_2,2}\cdot(\epsilon_x+\epsilon_y+\epsilon_z)^\vee
$$ 
have a common dense subset.
\end{lem}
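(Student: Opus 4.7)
The plan is to use the explicit description from Corollary~\ref{coro1image}, namely $\pi(C_w)=\bU^-_{w,0}\bU^-_{w,1}\bU^-_{w,2}\cdot\mu^\vee_r$ with $\mu^\vee_r=(\epsilon_x+\epsilon_y+\epsilon_z)^\vee$, and to compare the two products. Because both words begin with the letter $y$, the first segment of each polyline is the same and consequently $\bU^-_{w_1,0}=\bU^-_{w_2,0}$. However, I would not strip off this common prefix immediately: its commutators with the remaining factors turn out to be essential in producing the matching root subgroups on the other side.

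The first concrete task is to compute the affine root sets $\Psi_{w_i,j}$ directly from the definition: a positive root $\alpha=\epsilon_a-\epsilon_b$ belongs to $\Psi_{w_i,j}$ iff $a$ is the letter consumed at step $j$ and $b>a$, the affine level $n$ being read off from the position of $\mu^\vee_j$ on the polyline. A direct check shows that the two collections of generators agree as \emph{sets} of affine root subgroups, except for the single affine root $\epsilon_x-\epsilon_z$, which appears at level $0$ inside $\bU^-_{w_1,1}$ and at level $+1$ inside $\bU^-_{w_2,2}$; moreover the $x$-indexed subgroups come before the $z$-indexed ones in the $w_1$-product and in the opposite order in the $w_2$-product. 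The borderline case $x=y$ makes some entries of $\Psi_{w_i,1}$ and $\Psi_{w_i,2}$ collapse or shift by one, but its treatment is analogous.

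The main step is then to bring both products into a common normal form via the Chevalley commutator formula in $G(\ck)$. In type $A$, the only non-trivial brackets are of the form $[U_{\epsilon_a-\epsilon_b,\ell_1},U_{\epsilon_b-\epsilon_c,\ell_2}]\subset U_{\epsilon_a-\epsilon_c,\ell_1+\ell_2}$. A careful level count shows that after all commutations each parameter on either side either matches a parameter on the other side, or contributes a factor $U_\beta(at^\ell)$ with $\ell\ge(\mu^\vee_r,\beta)$, which by the shift formula $t^\mu U_\beta(at^\ell)t^{-\mu}=U_\beta(at^{\ell+(\mu,\beta)})$ recalled in the proof of Proposition~\ref{proposition3} stabilizes $\mu^\vee_r$ and hence drops out of the image. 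The crucial identity here is $(\epsilon_x-\epsilon_z,\mu^\vee_r)=0$, which is exactly what ensures that the level-shift between the two occurrences of $\epsilon_x-\epsilon_z$ is absorbed by the stabilizer of $\mu^\vee_r$. Off a proper closed condition on the parameters the two products therefore sweep out the same subset of the affine Grassmannian, which yields the required common dense subset.

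The main obstacle will be the level-bookkeeping in this commutator step: for each bracket produced, one must verify that the level of the resulting affine root subgroup is exactly the one needed either to match a factor on the other side or to satisfy the stabilizer inequality $\ell\ge(\mu^\vee_r,\beta)$. A secondary, lower-level issue is that the common dense subset is obtained only after inverting certain parameters appearing in the commutator expressions, which is why the statement produces a \emph{common dense subset} rather than an equality; tracking exactly which open subset works requires care but no conceptual input beyond the commutator computation.
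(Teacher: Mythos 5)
Your proposal follows essentially the same route as the paper's proof: both start from the explicit product description of Corollary~\ref{coro1image}, compare the affine root subgroups occurring in the two products, rearrange them with the Chevalley commutator formula, discard the factors lying in the stabilizer of $\nu^\vee=(\epsilon_x+\epsilon_y+\epsilon_z)^\vee$, and obtain the common dense subset by inverting the commutator parameters. The only difference is organisational: the paper first invokes Proposition~\ref{wordgleichimage} to replace each word by a two-column key with the same reading word, which removes in advance the extra factor $U_{(-\epsilon_x+\epsilon_z,0)}$ (resp.\ $U_{(-\epsilon_y+\epsilon_z,0)}$) that you must instead commute into the stabilizer by hand, and one should be careful that the level $-1$ occurrence of $\epsilon_x-\epsilon_z$ on the $w_2$-side is \emph{not} absorbed by the stabilizer (only the level $0$ occurrence on the $w_1$-side is) but has to be reproduced on the $w_1$-side by the commutator of $U_{(-\epsilon_y+\epsilon_z,0)}$ with $U_{(-\epsilon_x+\epsilon_y,-1)}$ -- exactly as your remark about not stripping off the common prefix anticipates.
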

\begin{proof}
Assume first $x<y<z$.
The key associated to $w_1$ is $\T_1=\Skew(0:z,x,y)$, let $\T_1'$ be the key $\Skew(0:x,y|0:z)$ having the same associated word.
Set $\nu^\vee=(\epsilon_x+\epsilon_y+\epsilon_z)^\vee$, then we have:
$$
\begin{array}{rcl}
{\pi(C_{w_1})}&=&
\bU^-_{\gamma_{\T_1}}\cdot\nu^\vee\\
&=&
\bU^-_{\gamma_{\T_1'}}\cdot\nu^\vee\\
&=&
(\prod_{\substack{\ell>y}}U_{(-\epsilon_y+\epsilon_\ell,0)})
(\prod_{\substack{k>x\\ k\not=y,z}}U_{(-\epsilon_x+\epsilon_k,0)})U_{(-\epsilon_x+\epsilon_y,-1)}%\\
%&&\hskip 50pt
(\prod_{m>z }U_{(-\epsilon_z+\epsilon_m,0)})\cdot\nu^\vee\\
&=&(\prod_{\substack{\ell>y\\ \ell\not=z}}U_{(-\epsilon_y+\epsilon_\ell,0)})U_{(-\epsilon_y+\epsilon_z,0)}
(\prod_{\substack{k>x\\ k\not=y,z}}U_{(-\epsilon_x+\epsilon_k,0)})U_{(-\epsilon_x+\epsilon_y,-1)}\\
&&
\hskip 200pt(\prod_{m>z }U_{(-\epsilon_z+\epsilon_m,0)})\cdot\nu^\vee\\
&=&(\prod_{\substack{\ell>y\\ \ell\not=z}}U_{(-\epsilon_y+\epsilon_\ell,0)})
(\prod_{\substack{k>x\\ k\not=y,z}}U_{(-\epsilon_x+\epsilon_k,0)}) U_{(-\epsilon_y+\epsilon_z,0)}
U_{(-\epsilon_x+\epsilon_y,-1)}\\
&&
\hskip 200pt(\prod_{m>z }U_{(-\epsilon_z+\epsilon_m,0)})\cdot\nu^\vee
\end{array}
$$
The first equality is implied by Corollary~\ref{coro1image}, the second follows from
Proposition~\ref{wordgleichimage}, the third equation is just expressing $\bU_{\gamma_{\T_1'}}$
as a product of the unipotent factors corresponding to the vertices of the gallery $\gamma_{\T_1'}$,
the fourth equation is obtained by separating $U_{(-\epsilon_y+\epsilon_z,0)}$ from the remaining 
factors of the product $(\prod_{\substack{\ell>y\\ \ell\not=z}}U_{(-\epsilon_y+\epsilon_\ell,0)})$,
which is possible since all the factors of this product commute. The last equality is obtained by switching 
the commuting factors $U_{(-\epsilon_y+\epsilon_z,0)}$ and $(\prod_{\substack{k>x\\ k\not=y,z}}U_{(-\epsilon_x+\epsilon_k,0)})$.

The terms $\prod_{m>z }U_{(-\epsilon_z+\epsilon_m,0)}$ and $U_{(-\epsilon_x+\epsilon_y,-1)}$ commute, so we have
$$
{\pi(C_{w_1})}=(\prod_{\substack{\ell>y\\ \ell\not=z}}U_{(-\epsilon_y+\epsilon_\ell,0)})
(\prod_{\substack{k>x\\ k\not=y,z}}U_{(-\epsilon_x+\epsilon_k,0)}) U_{(-\epsilon_y+\epsilon_z,0)}
(\prod_{m>z }U_{(-\epsilon_z+\epsilon_m,0)})U_{(-\epsilon_x+\epsilon_y,-1)}\cdot\nu^\vee.
$$
Recall that for $m>z$ and $s,t\in\bC$ (see \cite{Ste} or \cite{T}):
$$
U_{(-\epsilon_y+\epsilon_z,0)}(s)U_{(-\epsilon_z+\epsilon_m,0)}(t)=U_{(-\epsilon_y+\epsilon_m,0)}(st)
U_{(-\epsilon_z+\epsilon_m,0)}(t)U_{(-\epsilon_y+\epsilon_z,0)}(s)
$$
Now $U_{(-\epsilon_y+\epsilon_m,0)}$, $m>z$, commutes with $U_{(-\epsilon_x+\epsilon_k,0)}$, $k\not=y,z$, $U_{(-\epsilon_y+\epsilon_z,0)}$
and $U_{(-\epsilon_z+\epsilon_q,0)}$, $z<q$, so we can join the factor $U_{(-\epsilon_y+\epsilon_m,0)}(st)$ into the first product (where this
unipotent subgroup occurs already with a free parameter) and 
obtain:
$$
{\pi(C_{w_1})}=(\prod_{\substack{\ell>y\\ \ell\not=z}}U_{(-\epsilon_y+\epsilon_\ell,0)})
(\prod_{\substack{k>x\\ k\not=y,z}}U_{(-\epsilon_x+\epsilon_k,0)}) 
(\prod_{m>z }U_{(-\epsilon_z+\epsilon_m,0)})
U_{(-\epsilon_y+\epsilon_z,0)}U_{(-\epsilon_x+\epsilon_y,-1)}\cdot\nu^\vee.
$$
Now for $s,t\in\bC$ we have
$$
U_{(-\epsilon_y+\epsilon_z,0)}(s)U_{(-\epsilon_x+\epsilon_y,-1)}(t)=
U_{(-\epsilon_x+\epsilon_y,-1)}(t) U_{(-\epsilon_x+\epsilon_z,-1)}(st)U_{(-\epsilon_y+\epsilon_z,0)}(s).
$$
Since $U_{(-\epsilon_y+\epsilon_z,0)}(s)\subseteq P_{\nu^\vee}$ for all $s\in\bC$, we can omit this term and see that
the set
$$
(\prod_{\substack{\ell>y\\ \ell\not=z}}U_{(-\epsilon_y+\epsilon_\ell,0)})
(\prod_{\substack{k>x\\ k\not=y,z}}U_{(-\epsilon_x+\epsilon_k,0)}) 
(\prod_{m>z }U_{(-\epsilon_z+\epsilon_m,0)})
U_{(-\epsilon_x+\epsilon_y,-1)}(*) U_{(-\epsilon_x+\epsilon_z,-1)}(**)\cdot\nu^\vee 
$$
for parameters $*,**\not=0$ forms a dense subset in $\pi(C_{w_1})$. After switching the commuting factors
$(\prod_{\substack{k>x\\ k\not=y,z}}U_{(-\epsilon_x+\epsilon_k,0)}) $ and $(\prod_{m>z }U_{(-\epsilon_z+\epsilon_m,0)})$
we see that
$$
(\prod_{\substack{\ell>y\\ \ell\not=z}}U_{(-\epsilon_y+\epsilon_\ell,0)})
(\prod_{m>z }U_{(-\epsilon_z+\epsilon_m,0)})
(\prod_{\substack{k>x\\ k\not=y,z}}U_{(-\epsilon_x+\epsilon_k,0)}) 
U_{(-\epsilon_x+\epsilon_y,-1)}(*) U_{(-\epsilon_x+\epsilon_z,-1)}(**)\cdot\nu^\vee 
$$ 
is also a dense subset in
$$
\begin{array}{rcl}
{\pi(C_{w_2})}&=&
\bU^-_{\gamma_{\T_2}}\cdot\nu^\vee\\
&=&
\bU^-_{\gamma_{\T_2'}}\cdot\nu^\vee\\
&=&
(\prod_{\substack{\ell>y\\ \ell\not=z}}U_{(-\epsilon_y+\epsilon_\ell,0)})
(\prod_{m>z }U_{(-\epsilon_z+\epsilon_m,0)})
(\prod_{\substack{k>x\\ k\not=y,z}}U_{(-\epsilon_x+\epsilon_k,0)})\\
&&\hskip 150pt U_{(-\epsilon_x+\epsilon_y,-1)} U_{(-\epsilon_x+\epsilon_z,-1)}\cdot\nu^\vee,
\end{array}
$$
where $\T_2=\Skew(0:x,z,y)$ and $\T_2'$ is the key $\Skew(0:x,y|1:z)$ having the same associated word,
which finishes the proof in this case.

Assume now $x=y<z$. The key associated to the word $w_1=xxz$ is a $\T_1=\Skew(0:z,x,x)$,
let $\T_1'$ be the key $\Skew(0:x,x|0:z)$. Set $\nu^\vee=2\epsilon^\vee_x+\epsilon^\vee_z$.
We get as image of the associated cell: 
$$
\pi(C_{w_1})=\bU^-_{\gamma_{\T_1}}\cdot\nu^\vee=\bU^-_{\gamma_{\T_1'}}\cdot\nu^\vee=
\prod_{k>x}U_{(-\epsilon_x+\epsilon_k,0)}\prod_{\substack{\ell>x\\\ell\not=z}}U_{(-\epsilon_x+\epsilon_\ell,1)}
\prod_{m>z}U_{(-\epsilon_z+\epsilon_m,0)}\cdot\nu^\vee.
$$
The key associated to the word $w_2=xzx$ is $\T_2=\Skew(0:x,z,x)$ and we get as image of the associated cell
$$
\begin{array}{rcl}
\pi(C_{w_2})%&=&\pi(C_{\Skew(0:x,z,x)})\\
&=&\bU^-_{\T_2}\cdot\nu^\vee\\
&=&\prod_{k>x}U_{(-\epsilon_x+\epsilon_k,0)}
\prod_{m>z}U_{(-\epsilon_z+\epsilon_m,0)}
\prod_{\substack{\ell>x}}U_{(-\epsilon_x+\epsilon_\ell,1)}\cdot\nu^\vee\\
\end{array}
$$
But $U_{(-\epsilon_x+\epsilon_z,1)}\subset P_{\nu^\vee}$ (since $(2\epsilon^\vee_x+\epsilon^\vee_z,\epsilon_x-\epsilon_z)-1=0$)
and the subgroups $U_{(-\epsilon_x+\epsilon_\ell,1)}$ and $U_{(-\epsilon_z+\epsilon_m,0)}$ commute for
$\ell>x,\ell\not=z$ and $m>z$, so
$$
\pi(C_{w_2})=\bU^-_{\T_2}\cdot\nu^\vee=
\prod_{k>x}U_{(-\epsilon_x+\epsilon_k,0)}
\prod_{\substack{\ell>x\\ \ell\not=z}}U_{(-\epsilon_x+\epsilon_\ell,1)}
\prod_{m>z}U_{(-\epsilon_z+\epsilon_m,0)}\cdot\nu^\vee=\pi(C_{w_1}),
$$
which finishes the proof of the lemma.
\end{proof}
\begin{lem}\label{knuthinbuildung00}
If $w_1=xzy$ and $w_2=zxy$, $x < y\le z$, then 
$$
\pi(C_{w_1})=\bU^-_{w_1,0}\bU^-_{w_1,1}\bU^-_{w_1,2}\cdot(\epsilon_x+\epsilon_y+\epsilon_z)^\vee
\text{\ and\ }
\pi(C_{w_2})=\bU^-_{w_2,0}\bU^-_{w_2,1}\bU^-_{w_2,2}\cdot(\epsilon_x+\epsilon_y+\epsilon_z)^\vee
$$ 
have a common dense subset.
\end{lem}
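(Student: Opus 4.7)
The plan is to follow the strategy of Lemma~\ref{knuthinbuildung0}: invoke Proposition~\ref{wordgleichimage} to replace each $w_i$ by a gallery coming from an alternative key tableau with the same word, use Corollary~\ref{coro1image} to write $\pi(C_{w_i})$ as an explicit product of root subgroups acting on $\nu^\vee = (\epsilon_x+\epsilon_y+\epsilon_z)^\vee$, and then rearrange via commutator relations to exhibit a common dense open subset. In the strict case $x<y<z$, since $x<z$ one writes $w_1 = xzy$ as the word of $\T_1' = \Skew(0:y,x|1:z)$, corresponding to the gallery $\gamma_{\T_1'} = ((x,z),(y))$ of type $(2,1)$ with polyline $(0,(\epsilon_x+\epsilon_z)^\vee,\nu^\vee)$, and since $x<y$ one writes $w_2 = zxy$ as the word of $\T_2' = \Skew(0:x,z|0:y)$, with gallery $\gamma_{\T_2'} = ((z),(x,y))$ of type $(1,2)$ and polyline $(0,\epsilon_z^\vee,\nu^\vee)$. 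A computation of $\bU^-_{\gamma_{\T_i'},k}$ as in the proof of Lemma~\ref{knuthinbuildung0} shows that $\pi(C_{w_1})$ and $\pi(C_{w_2})$ differ, up to reordering of blocks that commute freely, by a single factor: $\pi(C_{w_1})$ contains the extra term $B = U_{(-\epsilon_x+\epsilon_y,0)}$ (the $m=y$ term of the first block) while $\pi(C_{w_2})$ contains in its place $F = U_{(-\epsilon_x+\epsilon_z,-1)}$.

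The heart of the argument is the Chevalley commutator relation
$$
U_{(-\epsilon_x+\epsilon_y,0)}(a)\,U_{(-\epsilon_y+\epsilon_z,-1)}(e) = U_{(-\epsilon_y+\epsilon_z,-1)}(e)\,U_{(-\epsilon_x+\epsilon_z,-1)}(\pm ae)\,U_{(-\epsilon_x+\epsilon_y,0)}(a),
$$
which holds because $(\epsilon_x-\epsilon_y)+(\epsilon_y-\epsilon_z)=\epsilon_x-\epsilon_z$ is a root. I push $B$ rightward through the intermediate blocks of the $\pi(C_{w_1})$ expression: it commutes freely with $\prod_{z<m}U_{(-\epsilon_z+\epsilon_m,0)}$ since the relevant sums of roots are not roots, and its passage through $\prod_{y<m,\,m\ne z}U_{(-\epsilon_y+\epsilon_m,0)}$ generates auxiliary terms $U_{(-\epsilon_x+\epsilon_m,0)}$ with $m>y$, $m\ne z$; these commute back across every block on their left and are absorbed into $\prod_{x<m,\,m\ne y,z}U_{(-\epsilon_x+\epsilon_m,0)}$ via a generic reparametrization. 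When $B$ finally meets $U_{(-\epsilon_y+\epsilon_z,-1)}$, the displayed Chevalley relation produces the required $F$ acting on $\nu^\vee$, while the trailing $B$ drops out because $(-\epsilon_x+\epsilon_y,\nu^\vee)=0$ forces $B\subset P_{\nu^\vee}$. The resulting expression matches the generic form of $\pi(C_{w_2})$, exhibiting the common dense subset.

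The degenerate case $x<y=z$ is simpler: take $\T_1' = \Skew(0:z,x|1:z)$ and $\T_2' = \Skew(0:x,z|0:z)$, both legitimate since $x<z$. The resulting expressions for $\pi(C_{w_1})$ and $\pi(C_{w_2})$ only involve root subgroups $U_{(-\epsilon_x+\epsilon_m,0)}$ with $m>x$, $m\ne z$, and $U_{(-\epsilon_z+\epsilon_m,\ast)}$ with $m>z$; no pair of these roots sums to a root, so all factors commute freely and the two images coincide by direct rearrangement. The main obstacle is thus the combinatorial bookkeeping in the strict case: one has to verify each purported commutation against the root combinatorics of $A_{n-1}$, and check that the auxiliary factors generated by the Chevalley step reassemble into the blocks of $\pi(C_{w_2})$ with no leftover residue.
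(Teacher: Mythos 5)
Your proposal is correct and follows essentially the same route as the paper's own (sketched) proof: the identical auxiliary keys $\T_1'=\Skew(0:y,x|1:z)$ and $\T_2'=\Skew(0:x,z|0:y)$ (resp.\ their $y=z$ degenerations), Corollary~\ref{coro1image} plus Proposition~\ref{wordgleichimage} to expand the images, and the same Chevalley commutation producing $U_{(-\epsilon_x+\epsilon_z,-1)}(\pm ae)$ while the trailing $U_{(-\epsilon_x+\epsilon_y,0)}$ is absorbed into $P_{\nu^\vee}$. You in fact supply more of the root-combinatorial bookkeeping than the paper, which defers these computations to the proof of Lemma~\ref{knuthinbuildung0}; the only point left implicit is that, as there, the common dense subset is cut out by requiring the coupled parameters $e$ and $\pm ae$ to be nonzero.
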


\begin{proof}
We only give a sketch of proof and leave the details to the reader.
Assume first $x<y<z$.
The key associated to $w_1$ is $\T_1=\Skew(0:y,z,x)$, let $\T_1'$ be the key $\Skew(0:y,x|1:z)$ having the same associated word.
Set $\nu^\vee=(\epsilon_x+\epsilon_y+\epsilon_z)^\vee$, then, as in the proof of Lemma \ref{knuthinbuildung0}, we have, on one side:
$$
\begin{array}{rcl}
{\pi(C_{w_1})}&=&
\bU^-_{\gamma_{\T_1}}\cdot\nu^\vee\\
&=&
\bU^-_{\gamma_{\T_1'}}\cdot\nu^\vee\\
&=&
(\prod_{\substack{\ell>x\\ \ell\ne z}}U_{(-\epsilon_x+\epsilon_\ell,0)})
(\prod_{\substack{m>z}}U_{(-\epsilon_z+\epsilon_m,0)})
(\prod_{\substack{k>y\\ k\ne z}}U_{(-\epsilon_y + \epsilon_k,0)})
U_{(-\epsilon_y+\epsilon_z,-1)}\cdot\nu^\vee\\
\end{array}
$$ On the other side, the key associated to $w_2$ is $\T_2=\Skew(0:y,x,z)$. Let $\T_2'$ be the key $\Skew(0:x,z|0:y)$ having the same associated word. Then, we have:
$$
\begin{array}{rcl}
{\pi(C_{w_1})}&=&
\bU^-_{\gamma_{\T_1}}\cdot\nu^\vee\\
&=&
\bU^-_{\gamma_{\T_1'}}\cdot\nu^\vee\\
&=&
(\prod_{\substack{m>z}}U_{(-\epsilon_z+\epsilon_m,0)})
(\prod_{\substack{\ell>x\\ \ell\ne z,y}}U_{(-\epsilon_x+\epsilon_\ell,0)}) U_{(-\epsilon_x+\epsilon_z,-1)}\\
&&\hskip 130pt (\prod_{\substack{k>y\\ k\ne z}}U_{(-\epsilon_y + \epsilon_k,0)})U_{(-\epsilon_y + \epsilon_k, 0)}
\cdot\nu^\vee\\
\end{array}
$$ 
The same kind of computations as in the proof of Lemma \ref{knuthinbuildung0} shows that those two images share a common dense subset.
\medskip
Now assume that $x< y = y$. Again, using an analogous calculation with the keys $\T_1' = \Skew(0:y,x|1:y)$ and 
$\T_2' = \Skew(0:x,y|0:y)$, one can show that $\pi(C_{w_1})$ and $\pi(C_{w_2})$ have a common dense subset.
\end{proof}

Let now $w_1,w_2$ be words such that
$$
w_1=(i_1,\ldots,i_{r},y,x,z,i_{r+4},\ldots,i_N)\quad w_2=(i_1,\ldots,i_{r},y,z,x,i_{r+4},\ldots,i_N),
$$
where $x\le y<z$, and let $C_{w_1},C_{w_2}\subset \Sigma_{\underbrace{(1,\ldots,1)}_N}$ be the associated cells. Denote by
$$
\pi:\Sigma_{(1,\ldots,1)}\rightarrow X_{N\epsilon_1}
$$
the desingularization map. 
\begin{lem}\label{knuthinbuildung1}
$\overline{\pi(C_{w_1})}=\overline{\pi(C_{w_2})}$.
\end{lem}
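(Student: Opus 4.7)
The plan is to reduce the claim to Lemma~\ref{knuthinbuildung0} by absorbing the common prefix via Proposition~\ref{proposition3} and then absorbing the spectator factor from the three-letter calculation into the first unipotent group of the common suffix.

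I would begin by observing that the polylines of $w_1$ and $w_2$ agree at every vertex except $\mu^\vee_{r+2}$: the prefix $i_1,\ldots,i_r,y$ determines $\mu^\vee_0,\ldots,\mu^\vee_{r+1}$ identically, and the sum $\epsilon^\vee_x+\epsilon^\vee_y+\epsilon^\vee_z$ being symmetric in its summands forces $\mu^\vee_{r+3}=\mu^\vee_r+\epsilon^\vee_x+\epsilon^\vee_y+\epsilon^\vee_z$, and hence all subsequent vertices, to coincide. By Corollary~\ref{coro1image},
\[
\pi(C_{w_i}) \;=\; U^-_{\mu^\vee_0}\cdots U^-_{\mu^\vee_{r-1}}\cdot Z_{w_i}^{\ge r},
\]
where the prefix product is the same for $i=1,2$. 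Multiplication by this product is a morphism, so it respects closures in the sense $\overline{AB}=\overline{A\overline{B}}$; it therefore suffices to prove $\overline{Z_{w_1}^{\ge r}}=\overline{Z_{w_2}^{\ge r}}$. Proposition~\ref{proposition3} rewrites $Z_{w_i}^{\ge r}=t_{\mu^\vee_r}\cdot Z_{w_i^{\ge r}}$, and since $t_{\mu^\vee_r}$ is an automorphism of $\cg'$, the problem further reduces to the case $r=0$: comparing the closures of the images of the cells attached to the tails $w_1^{\ge r}=(y,x,z,i_{r+4},\ldots,i_N)$ and $w_2^{\ge r}=(y,z,x,i_{r+4},\ldots,i_N)$, viewed as galleries based at the origin.

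In this reduced situation, a second application of Corollary~\ref{coro1image} yields
\[
\pi(C_{w_i^{\ge r}}) \;=\; U^-_{0}\cdot U^-_{\epsilon^\vee_y}\cdot U^-_{\mu^\vee_2(w_i^{\ge r})}\cdot S,\qquad S:=U^-_{\nu^\vee_3}\cdots U^-_{\nu^\vee_{M-1}}\cdot\nu^\vee_M,
\]
where $\nu^\vee_3=(\epsilon_x+\epsilon_y+\epsilon_z)^\vee$ and $S$ is shared between $i=1,2$. I would then replay, on the three middle factors, the reorderings and commutator-formula substitutions used in the proof of Lemma~\ref{knuthinbuildung0}. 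These manipulations are internal to the middle product, and each auxiliary term they produce (of the form $U_{(-\epsilon_y+\epsilon_m,0)}(st)$ or $U_{(-\epsilon_x+\epsilon_z,-1)}(st)$) is absorbed into a free product already present in $U^-_{\epsilon^\vee_y}$ or $U^-_{\mu^\vee_2(w_i^{\ge r})}$, exactly as in the three-letter case.

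The only point requiring real adaptation --- and the main technical step of the argument --- is the final discarding of the spectator factor. In Lemma~\ref{knuthinbuildung0} the spectator $U_{(-\epsilon_y+\epsilon_z,0)}(s)$ (or $U_{(-\epsilon_x+\epsilon_z,1)}(s)$ in the subcase $x=y<z$) was dropped because it lies in $P_{\nu^\vee_3}$ and fixes the endpoint $\nu^\vee_3$. Here that endpoint is replaced by the whole set $S$, but the spectator can still be absorbed: the identity $(-\epsilon_y+\epsilon_z,\nu^\vee_3)=0$ (respectively $(-\epsilon_x+\epsilon_z,\nu^\vee_3)+1=0$) shows that the affine root $(-\epsilon_y+\epsilon_z,0)$ belongs to $\hat\Phi_3$ in the notation of \eqref{PhiPj}, so $U_{(-\epsilon_y+\epsilon_z,0)}\subseteq U^-_{\nu^\vee_3}$. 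Since $U^-_{\nu^\vee_3}$ is a group that forms the first factor of $S$, we obtain $U_{(-\epsilon_y+\epsilon_z,0)}\cdot S=S$, which is precisely the replacement needed for the common-dense-subset argument of Lemma~\ref{knuthinbuildung0} to go through verbatim in the long-word setting, yielding $\overline{\pi(C_{w_1})}=\overline{\pi(C_{w_2})}$.
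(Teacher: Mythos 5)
Your proof is correct and follows essentially the same route as the paper's: the paper likewise reduces to the three-letter computation of Lemma~\ref{knuthinbuildung0}, performs the same commutator manipulations, and disposes of the spectator factor $U_{(-\epsilon_y+\epsilon_z,\,\cdot\,)}$ by absorbing it into the first group $U^-_{\mu^\vee_{r+2}}$ of the common suffix, producing a common dense subset of the two images. The only difference is organizational: you translate the tail back to the origin via $t_{\mu^\vee_r}$ before computing, whereas the paper keeps the gallery in place and carries the shifts $p^{r}_{\cdot,\cdot}$ through the identical calculation.
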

\proof
Assume first $x<y<z$.
Let $\gamma_1$ be the gallery of shape ${(1,\ldots,1)}$ associated to $w_1$ and with key tableau $\Skew(0:i_N,\ldots,i_1)$.
Let $\gamma_1'$ be 
the gallery corresponding to the key tableau
$$
\Skew(0:i_N,.,.,.,i_{\splus},x,y,i_{\sminus},.,.,.,i_1|5:z).
$$
Let $\varphi(\gamma_1')=(\mu_0^\vee=0,\mu_1^\vee,\ldots,\mu_{N-1}^\vee)$ be the associated polyline.
Fix $p_{x,k}^{r}, p_{y,\ell}^{r},p_{z,m}^{r}\in \bz$ such that $\mu_{r}^\vee\in H_{(\epsilon_x-\epsilon_k,-p_{x,k}^{r})}$,
$\mu_{r}^\vee\in H_{(\epsilon_y-\epsilon_\ell,-p_{y,\ell}^{r})}$ and $\mu_{r}^\vee\in H_{(\epsilon_z-\epsilon_m,-p_{z,m}^{r})}$.
Recall $\mu_{r+1}^\vee=\mu_{r}^\vee+\epsilon_y^\vee$, so $\mu_{r+1}^\vee\in H_{(\epsilon_x-\epsilon_k,-p_{x,k}^{r})}$ for $k>x$, $k\not=y$,
$\mu_{r+1}^\vee\in H_{(\epsilon_z-\epsilon_m,-p_{z,m}^{r})}$ for $m>z$ and
$\mu_{r+1}^\vee\in H_{(\epsilon_x-\epsilon_y,-p_{x,y}^{r}+1)}$ . %, in this case $p_{x,y}^{r+1}=p_{x,y}^{r}-1$.
Using Corollary~\ref{coro1image} and Proposition~\ref{proposition3} we get: 
$$
\begin{array}{rcl}
{\pi(C_{w_1})}&=&\bU^-_{\gamma_1}\cdot\nu^\vee\\
%\bU_{\gamma_1,0}\cdots \bU_{\gamma_1,s}\bU_{\gamma_1,s+1}\bU_{\gamma_1,s+2}\cdots \bU_{\gamma_1,N-1}\cdot\nu^\vee\\
&=&
\bU^-_{\gamma_1'}\cdot\nu^\vee\\
&=&
U^-_{\mu^\vee_0}\cdots U^-_{\mu^\vee_{r-1}}(\prod_{\substack{\ell>y}}U_{(-\epsilon_y+\epsilon_\ell,p_{y,\ell}^{r})})
(\prod_{\substack{k>x\\ k\not=y,z}}U_{(-\epsilon_x+\epsilon_k,p_{x,k}^{r})})
U_{(-\epsilon_x+\epsilon_y,p_{x,y}^{r}-1)} \\
&&\hskip 150pt
(\prod_{m>z }U_{(-\epsilon_z+\epsilon_m,p_{z,m}^{r})})U^-_{\mu^\vee_{r+2}}\cdots U^-_{\mu^\vee_{N-2}}\nu^\vee\\
&=&U^-_{\mu^\vee_0}\cdots U^-_{\mu^\vee_{r-1}}
(\prod_{\substack{\ell>y\\ \ell\not=z}}U_{(-\epsilon_y+\epsilon_\ell,p_{y,\ell}^{r})})
(\prod_{\substack{k>x\\ k\not=y,z}}U_{(-\epsilon_x+\epsilon_k,p_{x,k}^{r})}) U_{(-\epsilon_y+\epsilon_z,p_{y,z}^{r})}\\
&&\hskip 70pt
(\prod_{m>z }U_{(-\epsilon_z+\epsilon_m,-p_{z,m}^{r})})U_{(-\epsilon_x+\epsilon_y,p_{x,y}^{r}-1)}
U^-_{\mu^\vee_{r+2}}\cdots U^-_{\mu^\vee_{N-2}}\nu^\vee,
\end{array}
$$
here we use the same arguments (switching commuting subgroups) as in the proof of Lemma~\ref{knuthinbuildung0}.
Recall that for $m>z$ and $s,t\in\bC$ (see \cite{Ste} or \cite{T}):
$$
\begin{array}{rl}
U_{(-\epsilon_y+\epsilon_z,p_{y,z}^{r})}(s)&U_{(-\epsilon_z+\epsilon_m,p_{z,m}^{r})}(t)\\
&=U_{(-\epsilon_y+\epsilon_m,p_{y,m}^{r})}(st)
U_{(-\epsilon_z+\epsilon_m,p_{z,m}^{r})}(t)U_{(-\epsilon_y+\epsilon_z,p_{y,z}^{r})}(s).
\end{array}
$$
As in the proof of Lemma~\ref{knuthinbuildung0}, after switching and gathering the root subgroups we get:
$$
\begin{array}{rcl}
{\pi(C_{w_1})}&=&U^-_{\mu^\vee_0}\cdots U^-_{\mu^\vee_{r-1}}
(\prod_{\substack{\ell>y\\ \ell\not=z}}U_{(-\epsilon_y+\epsilon_\ell,p_{y,\ell}^{r})})
(\prod_{\substack{k>x\\ k\not=y,z}}U_{(-\epsilon_x+\epsilon_k,p_{x,k}^{r})}) \\ 
&&\hskip 10pt (\prod_{m>z }U_{(-\epsilon_z+\epsilon_m,p_{z,m}^{r})})
U_{(-\epsilon_y+\epsilon_z,p_{y,z}^{r})}U_{(-\epsilon_x+\epsilon_y,p_{x,y}^{r}-1)}
U^-_{\mu^\vee_{r+2}}\cdots U^-_{\mu^\vee_{N-2}}\nu^\vee.
\end{array}
$$
Now for $s,t\in\bC$ we have
$$
\begin{array}{rcl}
U_{(-\epsilon_y+\epsilon_z,p_{y,z}^{r})}(s)U_{(-\epsilon_x+\epsilon_y,p_{x,y}^{r}-1)}(t)&=&
U_{(-\epsilon_x+\epsilon_y,p_{x,y}^{r}-1)}(t) U_{(-\epsilon_x+\epsilon_z,p_{x,z}^{r}-1)}(st)\\
&&\hskip 120pt U_{(-\epsilon_y+\epsilon_z,p_{y,z}^{r})}(s).
\end{array}
$$
Since $U_{(-\epsilon_y+\epsilon_z,p_{y,z}^{r})}(s)\subseteq U^-_{\mu_{r+2}^\vee}$ for all $s\in\bC$ 
(recall that $\mu_{r+2}^\vee=\mu_{r}^\vee+\epsilon_x^\vee+\epsilon_y^\vee+\epsilon_z^\vee)$, we can omit this term and we see 
(after switching commuting terms as in the proof of Lemma~\ref{knuthinbuildung0}) that
the set (with parameters $*,**\not=0$)
\begin{equation}\label{opensetone}
{\mathfrak S}=\begin{array}{r}
U^-_{\mu^\vee_0}\cdots U^-_{\mu^\vee_{r-1}}(\prod_{\substack{\ell>y\\ \ell\not=z}}U_{(-\epsilon_y+\epsilon_\ell,p_{y,\ell}^{r})})
(\prod_{m>z }U_{(-\epsilon_z+\epsilon_m,p_{z,m}^{r})})   
(\prod_{\substack{k>x\\ k\not=y,z}}U_{(-\epsilon_x+\epsilon_k,p_{x,k}^{r})}) \\
U_{(-\epsilon_x+\epsilon_y,p_{x,y}^{r}-1)}(*) U_{(-\epsilon_x+\epsilon_z,p_{x,z}^{r}-1)}(**)
U^-_{\mu^\vee_{r+2}}\cdots U^-_{\mu^\vee_{N-2}}\nu^\vee 
\end{array}
\end{equation}
is a dense subset in ${\pi(C_{w_1})}$. Now let $\gamma_2$ be the gallery of shape ${(1,\ldots,1)}$ associated to $w_2$ 
and let $\gamma_2'$ be the gallery corresponding to the key tableau
$$
\Skew(0:i_N,.,.,.,i_{\splus},x,y,i_{\sminus},.,.,.,i_1|6:z).
$$
We get 
$$
\begin{array}{rcl}
{\pi(C_{w_2})}&=&
\bU^-_{\gamma_{2}}\cdot\nu^\vee\\
&=&
\bU^-_{\gamma'_{2}}\cdot\nu^\vee\\
&=&
U^-_{\mu^\vee_0}\cdots U^-_{\mu^\vee_{r-1}}(\prod_{\substack{\ell>y\\ \ell\not=z}}U_{(-\epsilon_y+\epsilon_\ell,p_{y,\ell}^{r})})
(\prod_{m>z }U_{(-\epsilon_z+\epsilon_m,p_{z,m}^{r})})
\\
&&\hskip 10pt (\prod_{\substack{k>x\\ k\not=y,z}}U_{(-\epsilon_x+\epsilon_k,p_{x,k}^{r})})U_{(-\epsilon_x+\epsilon_y,p_{x,y}^{r}-1)} U_{(-\epsilon_x+\epsilon_z,p_{x,z}^{r}-1)}
U^-_{\mu^\vee_{r+2}}\cdots U^-_{\mu^\vee_{N-2}}\nu^\vee.
\end{array}
$$
It follows that the set ${\mathfrak S}$ in \eqref{opensetone} is a common dense subset of $\pi(C_{w_1})$ and $\pi(C_{w_2})$,
proving the lemma in this case. 

The arguments in the case $x=y<z$ used in Lemma~\ref{knuthinbuildung0} are modified in the same way
to prove that in this case we have $\pi(C_{w_1})=\pi(C_{w_2})$.
\endproof

%%%%%%%%%%%%%%%%%%%%%%%%%%%%%%%%%%%%%%%%%%%%%%%%%%%%%%
It remains to consider the second Knuth relation.
Let now $w_1,w_2$ be words such that
$$
w_1=(i_1,\ldots,i_{s-1},x,z,y,i_{s+3},\ldots,i_N)\quad w_2=(i_1,\ldots,i_{s-1},z,x,y,i_{s+3},\ldots,i_N),
$$
where $x< y\le z$, and let $C_{w_1},C_{w_2}\subset \Sigma_{(1,\ldots,1)}$ be the associated cells. 
The proof of the corresponding version of Lemma~\ref{knuthinbuildung1} for the Knuth relation $w_1\sim_K w_2$ is nearly identical to the proof
of the lemma above and is left to the reader. We just state the corresponding version of Lemma~\ref{knuthinbuildung1}
\begin{lem}\label{knuthinbuildung2}
$\overline{\pi(C_{w_1})}=\overline{\pi(C_{w_2})}$.
\end{lem}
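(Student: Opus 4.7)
The plan is to imitate the proof of Lemma~\ref{knuthinbuildung1} almost verbatim, using Lemma~\ref{knuthinbuildung00} as the ``local'' model in the same way that Lemma~\ref{knuthinbuildung1} used Lemma~\ref{knuthinbuildung0}. First, I would split into the two cases $x<y<z$ and $x<y=z$, treating the strict case in detail and indicating how the degeneration $y=z$ is handled by the same trick that was used in Lemma~\ref{knuthinbuildung0}.

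For the strict case, I would pass from $w_1=(i_1,\ldots,i_{s-1},x,z,y,i_{s+3},\ldots,i_N)$ to the gallery $\gamma_1'$ whose key tableau is
$$
\Skew(0:i_N,.,.,.,i_{\splus},y,x,i_{\sminus},.,.,.,i_1|6:z),
$$
and similarly from $w_2$ to $\gamma_2'$ whose key tableau is
$$
\Skew(0:i_N,.,.,.,i_{\splus},y,x,i_{\sminus},.,.,.,i_1|5:z).
$$
By Proposition~\ref{wordgleichimage} these new galleries have the same image as $w_1$ and $w_2$ respectively, and by Corollary~\ref{coro1image} together with Proposition~\ref{proposition3} each image can be written as
$$
U^-_{\mu^\vee_0}\cdots U^-_{\mu^\vee_{s-2}}\,\bU^-_{\gamma'_i,s-1}\bU^-_{\gamma'_i,s}\bU^-_{\gamma'_i,s+1}\,U^-_{\mu^\vee_{s+2}}\cdots U^-_{\mu^\vee_{N-2}}\cdot\nu^\vee,
$$
where the middle three factors depend only on the local situation around $\mu^\vee_{s-1}$. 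This is precisely the situation handled in Lemma~\ref{knuthinbuildung00}, except that all affine shifts are translated by the integers $p^{s-1}_{x,k}, p^{s-1}_{y,\ell}, p^{s-1}_{z,m}$ recording the position of $\mu^\vee_{s-1}$ relative to the relevant walls $H_{\epsilon_a-\epsilon_b,n}$.

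Next, I would run the same Chevalley commutator calculus as in Lemma~\ref{knuthinbuildung1}. Using the identity
$$
U_{(-\epsilon_x+\epsilon_z,p)}(s)\,U_{(-\epsilon_x+\epsilon_y,p')}(t)
=U_{(-\epsilon_x+\epsilon_y,p')}(t)\,U_{(-\epsilon_y+\epsilon_z,p-p')}(\pm st)\,U_{(-\epsilon_x+\epsilon_z,p)}(s),
$$
together with the commutation among the remaining factors indexed by $\ell>y,\,\ell\ne z$, by $m>z$, and by $k>x,\,k\ne y,z$, I can bring both descriptions of $\pi(C_{w_1})$ and $\pi(C_{w_2})$ into the common normal form
$$
U^-_{\mu^\vee_0}\cdots U^-_{\mu^\vee_{s-2}}
\Bigl(\prod_{\ell>y,\ell\ne z}U_{(-\epsilon_y+\epsilon_\ell,p^{s-1}_{y,\ell})}\Bigr)
\Bigl(\prod_{m>z}U_{(-\epsilon_z+\epsilon_m,p^{s-1}_{z,m})}\Bigr)
\Bigl(\prod_{k>x,k\ne y,z}U_{(-\epsilon_x+\epsilon_k,p^{s-1}_{x,k})}\Bigr)
$$
followed by a pair of factors $U_{(-\epsilon_x+\epsilon_y,p^{s-1}_{x,y}-1)}(*)\,U_{(-\epsilon_x+\epsilon_z,p^{s-1}_{x,z}-1)}(**)$ acting on $U^-_{\mu^\vee_{s+2}}\cdots U^-_{\mu^\vee_{N-2}}\cdot\nu^\vee$, with parameters $*,**\neq0$. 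An extra root subgroup $U_{(-\epsilon_y+\epsilon_z,\cdot)}$ produced by one of the commutator identities is absorbed into $U^-_{\mu^\vee_{s+2}}$, exactly as in the proof of Lemma~\ref{knuthinbuildung1}, because $\mu^\vee_{s+2}=\mu^\vee_{s-1}+\epsilon^\vee_x+\epsilon^\vee_y+\epsilon^\vee_z$ lies on the wall killing this subgroup. This common set is dense in both images, so their closures agree.

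The main obstacle, as in the earlier proof, is bookkeeping: one must verify that every shift $p^{s-1}_{\cdot,\cdot}$ transforms correctly under each commutation and that at the end the ``extra'' root subgroups produced by the Chevalley relations actually lie in the stabilizer of $\nu^\vee$ (or, more generally, in one of the $U^-_{\mu^\vee_{s+2}},\ldots,U^-_{\mu^\vee_{N-2}}$). Once this is checked, the degenerate case $x<y=z$ is dispatched by the same argument as in Lemma~\ref{knuthinbuildung00}: the root subgroup $U_{(-\epsilon_x+\epsilon_y,p^{s-1}_{x,y}-1)}=U_{(-\epsilon_x+\epsilon_z,p^{s-1}_{x,z}-1)}$ already lies inside $P_{\nu^\vee}$ after translation, giving directly $\pi(C_{w_1})=\pi(C_{w_2})$ without needing to pass to closures.
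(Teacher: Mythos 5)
Your overall strategy --- reduce to a local two-column key via Proposition~\ref{wordgleichimage} and Proposition~\ref{proposition3}, then run the commutator calculus of Lemma~\ref{knuthinbuildung00} with all affine levels translated by the integers $p^{s-1}_{\cdot,\cdot}$ --- is exactly what the paper intends (its own ``proof'' is a one-line deferral to the reader). But the details you give are not a correct adaptation of the first-relation computation, and the argument as written fails at the first step. Your auxiliary gallery $\gamma_2'$, whose key has top row $\ldots,i_{s+3},y,x,i_{s-1},\ldots$ with the $z$ hanging below the $y$, has associated word $(\ldots,i_{s-1},x,y,z,i_{s+3},\ldots)$: reading columns from the right one meets the single box $x$ before the column containing $y$ over $z$. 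This is not $w_2=(\ldots,z,x,y,\ldots)$, and for $x<y<z$ it is not even Knuth equivalent to it (the words $xyz$ and $zxy$ lie in different plactic classes), so Proposition~\ref{wordgleichimage} cannot be used to identify $\pi(C_{\gamma_2'})$ with $\pi(C_{w_2})$. The correct key, modelled on $\T_2'$ of Lemma~\ref{knuthinbuildung00}, has top row $\ldots,i_{s+3},x,z,i_{s-1},\ldots$ with a $y$ hanging below the $x$. Note that, unlike in the first Knuth relation, the two auxiliary keys for the second relation do \emph{not} share the same top row, so the ``only the position of the hanging box changes'' pattern you copied from Lemma~\ref{knuthinbuildung1} does not carry over.

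Two further points need repair. The commutator identity you quote is false: $(-\epsilon_x+\epsilon_z)+(-\epsilon_x+\epsilon_y)$ is not a root, so $U_{(-\epsilon_x+\epsilon_z,p)}$ and $U_{(-\epsilon_x+\epsilon_y,p')}$ simply commute. The identity actually needed here is the one between $U_{(-\epsilon_x+\epsilon_y,p^{s-1}_{x,y})}$ and $U_{(-\epsilon_y+\epsilon_z,p^{s-1}_{y,z}-1)}$, producing $U_{(-\epsilon_x+\epsilon_z,p^{s-1}_{x,z}-1)}$. Consequently your proposed common normal form, ending in $U_{(-\epsilon_x+\epsilon_y,\cdot)}(*)\,U_{(-\epsilon_x+\epsilon_z,\cdot)}(**)$, is the one for the \emph{first} Knuth relation; for the second relation the two free-parameter factors are $U_{(-\epsilon_x+\epsilon_z,p^{s-1}_{x,z}-1)}$ and $U_{(-\epsilon_y+\epsilon_z,p^{s-1}_{y,z}-1)}$, and the factor to be discarded is $U_{(-\epsilon_x+\epsilon_y,p^{s-1}_{x,y})}$, which lies in $U^-_{\mu^\vee_{s+2}}$ because $(\epsilon_x-\epsilon_y,\epsilon^\vee_x+\epsilon^\vee_y+\epsilon^\vee_z)=0$. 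With the auxiliary key, the commutator identity, and the normal form corrected in this way, the proof does go through along the lines you describe.
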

\vskip 10pt\noindent
{\it Proof of Theorem~\ref{mainthm1}.} 
The direction {\it i)}$\Rightarrow${\it ii)} of the equivalence in Theorem~\ref{mainthm1}{\it b)} follows from
Lemma~\ref{knuthinbuildung1} and Lemma~\ref{knuthinbuildung2}. 
It was proved in \cite{GL2} (in a more general context) that if $\T$ is a semistandard Young tableaux, 
then $\overline{\pi(C_{\gamma_\T})}$ is a MV-cycle in $X_{\nu^\vee}$ of coweight $\mu^\vee$,
where $\mu^\vee$ is the coweight of the gallery $\gamma_\T$, $\ud=(d_1,\ldots,d_r)$ is the shape of $\T$ and 
$\nu^\vee=\sum_{j=1}^s \omega_{d_j}^\vee$. Moreover, the map $\T\rightarrow \overline{\pi(C_{\gamma_\T})}$
induces a bijection between the semistandard Young tableaux of shape $\ud$ and 
coweight $\mu^\vee$ and the MV-cycles in $X_{\nu^\vee}$ of coweight $\mu^\vee$.
Since an arbitrary gallery is Knuth equivalent
to a unique semistandard Young tableau, this proves the direction {\it i)}$\Leftarrow${\it ii)} of the equivalence
and also part {\it a)} of the theorem. 

\bigskip
{\bf Acknowledgment:} St\'ephane Gaussent thanks the project ANR-09-JCJC-0102-01 for some financial support. 
Peter Littelmann and An Hoa Nguyen have been  supported by the priority program SPP 1388 {\it Representation Theory}
of the German Science Foundation
and the Research Training Group 1269 {\it Global structures in geometry and analysis}.

%%%%%%%%%%%%%%%%%%%%%%%%%%%%%%%%%%%%%%%%%%%%%%%%%%%%%%%%
%%%%%%%%%%%%%%%%%%%%%%%%%%%%%%%%%%%%%%%%%%%%%%%%%%%%%%%%

\begin{thebibliography}{BGG2}

%\bibitem{BaGa}
%P. Baumann and S. Gaussent, {\em On Mirkovi\'c-Vilonen cycles and crystal combinatorics}, 
%Represent. Theory {\bf 12} (2008), pp. 83-130.

\bibitem{BB} 
A. Bia{\l}ynicki-Birula, {\em Some properties of the decompositions of 
algebraic varieties determined by actions of a torus}, 
Bull. Acad. Polon. Sci. S\'er. Sci. Math. Astronom. Phys., {\bf 24}, (1976), pp. {667--674},

\bibitem{Bourb}
{N. Bourbaki} {\em \'El\'ements de math\'ematique. Fasc. XXXIV. Groupes et alg\`ebres de Lie,} Chapitre IV: 
Groupes de Coxeter et syst\`emes de Tits. Chapitre V: Groupes engendr\'es par des r\'eflexions. Chapitre VI: 
syst\`emes de racines. Actualit\'es Scientifiques et Industrielles, No. 1337 Hermann, Paris 1968 288 pp.

\bibitem{Ful}
W. Fulton, {\em Introduction to toric varieties}, Annals of Mathematics Studies, 131. 
The William H. Roever Lectures in Geometry. Princeton University Press, Princeton, NJ, 1993. xii+157 pp.

\bibitem{GL2}
{S. Gaussent and P. Littelmann}: {\em One-skeleton galleries, the path model and a generalization of Macdonald's
formula for Hall-Littlewood polynomials}, Int Math Res Notices (2012) Vol. 2012 2649--2707,
 doi:10.1093/imrn/rnr108

\bibitem{hongkang}
Hong, Jin; Kang, Seok-Jin, {\em Introduction to quantum groups and crystal bases}, 
Graduate Studies in Mathematics, 42. American Mathematical Society, Providence, RI, 2002.

\bibitem{joseph}
{A. Joseph}, {\it Quantum Groups and their primitive ideals},
Springer Verlag, New York, 1994.


\bibitem{kashiwaraone}
{M. Kashiwara}, Crystalizing the $q$-analog of Universal 
Enveloping Algebras, Commun. Math. Phys. 133 (1990), 249-260.

\bibitem{kashiwaratwo}
{M. Kashiwara}, Crystal base and Littelmann's refined Demazure 
character formula, RIMS 133 (1992), preprint.

\bibitem{knuth}
{D. E. Knuth}, Permutations, matrices, and generalized Young tableaux,
Pacific Journal of Math. 34, (1970) 709-727.

\bibitem{Ku} 
S.~Kumar, {\em Kac-Moody Groups, their Flag Varieties and Representation Theory}, 
Progress in Mathematics, 204. BirkhŠuser Boston, Inc., Boston, MA, 2002. xvi+606 pp.  (2002).

\bibitem{La}
A. Lascoux, {\em Cyclic permutations on words, tableaux and harmonic polynomials}, 
Proceedings of the Hyderabad Conference on Algebraic Groups (Hyderabad, 1989), 323--347, 
Manoj Prakashan, Madras, 1991

\bibitem{LaS}
A. Lascoux and M. P. Sch\"utzenberger, {\em Le mono\"\i de plaxique}, in ``Noncommutative Structures
in Algebra and Geometric Combinatorics,'' Quaderni della Ricerca Scientifica del
C.N.R., Rome, 1981.

\bibitem{Li1}
P.~Littelmann, {\em A Plactic Algebra for Semisimple Lie Algebras}, Adv. Math. {\bf 124}, pp. 312--331, (1996).

\bibitem{Li2}
P.~Littelmann, {\em Paths and root operators in representation theory}, 
Ann. of Math. (2) 142 (1995), no. 3, 499Ð525.

\bibitem{lusztig}
{G. Lusztig}, Canonical bases arising from quantized enveloping 
algebras II, Prog. Theor. Phys. 102 (1990), 175-201.

\bibitem{Lu}
G.~Lusztig, {\em Singularities, character formulas, and a $q$-analog of weight multiplicities}, 
Ast\'erisque {\bf 101-102}, (1983), pp. 208-229.

\bibitem{MV}
I.~Mirkovi\'c and K.~Vilonen, {\em Geometric Langlands duality and representations of algebraic 
groups over commutative rings}, Ann. of Math. (2) 166 (2007), no. 1, 95Ð143. 

\bibitem{Sch}
M. P. Sch\"utzenberger, {\em La correspondance de Robinson}, in ``Combinatoire de Repr\'esentation
du Groupe Symmetrique,'' Lecture Notes in Math., Vol. 579, Springer-Verlag,
New York-Berlin, 1976.

\bibitem{Ste}
R. Steinberg, {\em Lectures on Chevalley groups. Yale University}, New Haven, Conn., 1968.
Notes prepared by John Faulkner and Robert Wilson.

\bibitem{T}
J. Tits, {\em Uniqueness and presentation of Kac-Moody groups over fields}, J. Algebra, {\bf 105} (2)
pp. 542--573, (1987).
\end{thebibliography}
\end{document}